\theoremstyle{plain}
\newtheorem{theorem}{Theorem}[section]
\newtheorem{proposition}[theorem]{Proposition}
\newtheorem{corollary}[theorem]{Corollary}
\newtheorem{lemma}[theorem]{Lemma}
\theoremstyle{remark}
\newtheorem{remark}[theorem]{Remark}
\newtheorem{example}[theorem]{Example}
\theoremstyle{definition}
\newtheorem{assumption}[theorem]{Assumption}
\newtheorem{definition}[theorem]{Definition}
\def\thm@space@setup{%
  \thm@preskip=\parskip \thm@postskip=0pt
}
\numberwithin{equation}{section}
\DeclareMathOperator*\essinf{ess\,inf}
\DeclareMathOperator*\arctanh{arc\,tanh}
\begin{document}

\title{Optimal Trade Execution with Instantaneous Price Impact \\and Stochastic Resilience\footnote{Financial support through the \textit{CRC 649 Economic Risk} and \textit{d-fine GmbH} is gratefully acknowledged. We thank Peter Bank, R\"udiger Frey, Nizar Touzi and seminar participants at various institutions for valuable comments and feedback.}}

\author{Paulwin Graewe\footnote{Department of Mathematics, Humboldt-Universit\"{a}t zu Berlin, Unter den Linden 6, 10099 Berlin, Germany, \texttt{graewe@math.hu-berlin.de}.} \and Ulrich Horst\footnote{Department of Mathematics and School of Business and Economics, Humboldt-Universit\"{a}t zu Berlin,
Unter den Linden 6, 10099 Berlin, Germany, \texttt{horst@math.hu-berlin.de}.}}
\maketitle

\begin{abstract}
	We study an optimal execution problem in illiquid markets with both instantaneous and persistent price impact and stochastic resilience when only absolutely continuous trading strategies are admissible. In our model the value function can be described by a three-dimensional system of backward stochastic differential equations (BSDE) with a singular terminal condition in one component. We prove existence and uniqueness of a solution to the BSDE system and characterize both the value function and the optimal strategy in terms of the unique solution to the BSDE system. Our existence proof is based on an asymptotic expansion of the BSDE system at the terminal time that allows us to express the system in terms of a equivalent system with finite terminal value but singular driver. 
\end{abstract}

{\bf Keywords:} stochastic control, multi-dimensional backward stochastic differential equation, portfolio liquidation, singular terminal value

{\bf AMS subject classification:} 93E20, 60H15, 91G80

\section{Introduction and overview}

	Let $T \in (0,\infty)$. Let $(\Omega,\mathcal F,(\mathcal F_t)_{t\in[0,T]},\mathbb P)$ be filtered probability space that carries a $m$-di\-men\-sion\-al standard Brownian motion  $W=(W_t)_{t\in[0,T]}$. We assume throughout that $(\mathcal F_t)_{t\in[0,T]}$ is the filtration generated by $W$ completed by all the null sets and that $\mathcal F = \mathcal F_T$. We denote by $L^\infty_{\mathcal F}(0,T;\mathbb R^d)$ and $L^\infty_{\mathcal F}(\Omega; C([0,T];\mathbb R^d))$, respectively, the set of progressively measurable $\mathbb R^d$-valued, respectively, continuous processes that are essentially bounded. $L^2_{\mathcal F}(0,T;\mathbb R^d)$ denotes the set of progressively measurable $\mathbb R^d$-valued processes $(Y_t)_{t\in[0,T]}$ such that $E[\int_0^T|Y_t|^2\,dt]<\infty$, and $L^2_{\mathcal F}(\Omega;C([0,T];\mathbb R^d))$ denotes the subset of all such processes with continuous sample paths such that $E[\sup_{t\in[0,T]}|Y_t|^2]<\infty$. All equations and inequalities are to be understood in the $\mathbb P$-a.s.~sense.

	In this paper we address the linear-quadratic non-Markovian stochastic control problem 
\begin{equation} \label{control-problem}
	\essinf_{\xi \in L^2_{\mathcal F}(0,T;\mathbb R)} \mathbb E\left[ \int_0^T\{\tfrac{1}{2}\eta\xi_s^2+\xi_sY_s+\tfrac{1}{2}\lambda_sX_s^2\}\,ds		\right]
\end{equation}
subject to 
\begin{equation*} 
\left\{\begin{aligned}
	X_t&= x - \int_0^t \xi_s\,ds,  \quad t \in [0,T],\\
	X_T &= 0, \\
	Y_t&=y + \int_0^t\{-\rho_sY_s+\gamma\xi_s\}\,ds, \quad t \in [0,T].
\end{aligned}\right.
\end{equation*}
Here, $\eta$ and $\gamma$ are positive constants and $\rho$ and $\lambda$ are progressively measurable, non-negative and essentially bounded stochastic processes:
\[
	\eta>0, \gamma \in \mathbb R_+; \quad \rho, \lambda \in L^\infty_{\mathcal F}(0,T;\mathbb R_+).
\]
The process $(X_t,Y_t)_{t \in [0,T]}$ is called the \textit{state process}. It is governed by the \textit{control} $\xi=(\xi_t)_{t \in [0,T]}$. The processes $\lambda=(\lambda_t)_{t \in [0,T]}$ and $\rho=(\rho_t)_{t \in [0,T]}$ are uncontrolled. 
Control problems of the above form arise in models of optimal portfolio liquidation under market impact with stochastic resilience. In such models $X_t \geq 0$ denotes the number of shares an investor needs to sell at time $t \in [0,T]$, $\xi_t$ denotes the rate at which the stock is traded at time $t \in [0,T]$, and the terminal state constraint $X_T = 0$ is the \textit{liquidation constraint}. The process $Y$ describes the \textit{persistent price impact} caused by past trades in a block-shaped limit order book market with constant order book depth $1/\gamma>0$ as in Obizhaeva and Wang~\cite{ObizhaevaWang13}. One interpretation is that the trading rate $\xi$ adds a drift to an underlying fundamental martingale price process. This results in an execution price process of the form 
\[
	\tilde S_t = S_t - \eta \xi_t - Y_t
\]
where $S_t$ denotes the underlying fundamental martingale price process. The process $\rho \in L^\infty_{\mathcal F}(0,T;\mathbb R_+)$ describes the rates at which the order book recovers from past trades. The constant $\eta > 0$ describes an additional \textit{instantaneous impact factor} as in Almgren and Chriss~\cite{AlmgrenChriss00}. The first two terms of running cost term in \eqref{control-problem} capture the expected liquidity cost resulting from the instantaneous and the persistent impact, respectively. The third term can be interpreted as a measure of the market risk associated with an open position. It penalizes slow liquidation. We allow the risk factor $\lambda$ to be stochastic. 

	The majority of the optimal trade execution literature allows for only one of the two possible price impacts. The first approach, initiated by Bertsimas and Lo~\cite{BertsimasLo98} and Almgren and Chriss~\cite{AlmgrenChriss00}, describes the price impact as a purely temporary effect that depends only on the present trading rate and does not influence future prices. The impact is typically assumed to be linear in the trading rate, leading to a quadratic cost term of the form $\frac{1}{2} \eta \xi_t^2$. 
	
	In our framework,
the special case $\rho \equiv 0$, $\gamma=0$, $y=0$, and $\lambda \equiv const$ corresponds to model of Almgren and Chriss~\cite{AlmgrenChriss00}. Their model has been extended by many authors. Closest to our work are the papers by Ankirchner et al.~\cite{AnkirchnerJeanblancKruse14}, Graewe et al. \cite{GraeweHorstQiu13}, Horst et al.~\cite{HorstQiuZhang16}, and Kruse and Popier~\cite{KrusePopier16}. They all consider non-Markovian liquidation problems with purely temporary price impact where the cost functional is driven by general adapted factor processes and where the HJB equation can be solved in terms of {\it one-dimensional} BSDEs or BSPDEs with singular terminal values, depending on the dynamics of the factor processes. A general class of Markovian liquidation problems has been solved in Schied~\cite{Schied13} by means of Dawson--Watanabe superprocess. This approach avoids the use of HJB equations and uses instead a probabilistic verification argument based on log-Laplace functionals of superprocesses. 

	A second approach, initiated by Obizhaeva and Wang~\cite{ObizhaevaWang13} assumes that price impact is persistent with the impact of past trades on current prices decaying over time. When impact is persistent one often allows for both absolutely continuous and singular trading strategies. In~\cite{ObizhaevaWang13} the authors assumed constant resilience and market depth. Fruth et al.~\cite{FruthSchoenebornUrusov14} generalized the model to deterministic time-varying market depths and resiliences and obtained a closed form solution by calculus of variation techniques. In the follow up work~\cite{FruthSchoenebornUrusov15} the authors allowed for stochastic liquidity parameters. They showed the state space divides into a trade and a no-trade region but did not obtain an explicit description of the boundary. Characterization of optimal strategies results in terms of coupled BSDE systems were obtained by Horst and Naujokat \cite{HorstNaujokat14} for a model of optimal curve following in a two-sided limit order book. An explicit solution of the related free-boundary problem in a model with infinite time horizon and multiplicative price impact has recently been given by Becherer et al.~\cite{BechererBilarevFrentrup16}. 

	In this paper we analyze a stochastic control problem arising in models of optimal trade execution with both instantaneous and persistent price impact where only absolutely continuous trading strategies are admissible. Economically, the restriction to absolutely continuous strategies means that the instantaneous impact is the dominating factor. Mathematically, it allows us to formulate the resulting control problem within in a classical, rather than singular stochastic control framework, and to obtain a closed form solution for both, the value function and the optimal trading strategy. Characterizing the value function is typically hard if singular controls are allowed. In fact, when both absolutely continuous and singular controls are admissible as in e.g.~\cite{HorstNaujokat14}, one typically only obtains characterization results for optimal controls using maximum principles. 

	Within our modeling framework,  the value function can be represented in terms of the solution to a fully coupled \textit{three-dimensional} stochastic Riccati equation (BSDE system). For the benchmark case of constant model parameters the stochastic system reduces to a deterministic ODE system. For this case we illustrate how our model can be used to approximate liquidation models with block trades and can, hence, be viewed as a first step towards a unified approach to singular and regular stochastic control problems with singular terminal values.     

While uncoupled ODE, respectively, BSDE systems arise in the trade execution models of Gatheral and Schied~\cite{GatheralSchied11} or Kratz~\cite{Kratz14}, respectively, Ankirchner and Kruse~\cite{AnkirchnerKruse15}, our model seems to be the first that requires the analysis of multi-dimensional BSDE systems.  In proving the existence of a unique solution to the BSDE system that describes the value function two challenges need to be overcome. First,  the liquidation constraint imposes a singular terminal condition on the first component of the BSDE system. Second, our BSDE system does not satisfy the quasi-monotonicity condition that is necessary for the multi-dimensional comparison principle in \cite{HuPeng06} to hold. In a one-dimensional setting BS(P)DEs with singular terminal values are well understood and an array of existence of solution results and comparison principles has been obtained in the literature. The majority of the existing results including \cite{AnkirchnerJeanblancKruse14,GraeweHorstQiu13,HorstQiuZhang16,KrusePopier16} rely on a finite approximation of the singular terminal value. The (minimal) solution with singular terminal value is then obtained by a monotone limit argument. 

	We extend the asymptotic expansion approach introduced in Graewe~et~al.~\cite{GraeweHorstSere13} to BSDE systems. The idea is to determine the precise asymptotic behavior of a potential solution to the BSDE system at the terminal time by finding appropriate a priori estimates. The asymptotics of the solution at the terminal time allows us to characterize the solution to the BSDE system with singular terminal value in terms of a BSDE with finite terminal value yet singular driver, for which the existence of a solution in a suitable space can be proved using standard fixed point arguments. Finally, we establish the verification result from which we deduce uniqueness of solutions to the BSDE system as well as a closed-form representation of the optimal trading strategy.

	Establishing the a priori estimates for our BSDE system is key for both the proof of existence of a solution and the verification theorem. As pointed out above the BSDE system that characterizes the value does not satisfy the quasi-monotonicity condition of Hu and Peng~\cite{HuPeng06}. In order to overcome this problem we consider the joint dynamics of the BSDE that describes the value function and two additional BSDEs that describe the candidate optimal trading strategy. Using the comparison principle for BSDE systems in~\cite{HuPeng06} we first determine the range of all these processes from which we then deduce the desired deterministic upper bounds for the coefficients of the value function. 
	
	The remainder of this paper is structured as follows. The stochastic control problem is formulated in Section~\ref{sec-problem-formulation}. The a priori estimates and asymptotic behavior of the solution is established in Section~\ref{sec-a-priori}. Existence to the HJB equation is proven in Section~\ref{sec-existence}. The verification argument is carried out in Section~\ref{sec-verification}. In Appendix~\ref{sec-appendix} we recall the multi-dimensional comparison principle for BSDEs and formulate a local $L^\infty$-existence result for BSDEs with locally Lipschitz drivers.
	
\textit{Notational convention.}  Whenever the notation $T^-$ appears  we mean that the statement holds for all the $T'<T$ when $T^-$ is replaced by $T'$, e.g., $L^2_{\mathcal F}(0,T^-;\mathbb R^{d\times m})=\bigcap_{T'<T}L^2_{\mathcal F}(0,T';\mathbb R^{d\times m})$. Furthermore, for $Y\in L_{\mathcal F}^\infty(\Omega,C([0,T^-];\mathbb R))$ we mean by $L^\infty$-$\lim_{t\rightarrow T}Y_t= \infty$ that for every $C>0$ there exists $T'<T$ such that $Y_t\geq C$ for all $t\in[T',T)$, $\mathbb P$-a.s.

\section{Main result} \label{sec-problem-formulation}

	For any initial state $(t,x,y)\in[0,T)\times\mathbb R\times\mathbb R$ we define by 
\begin{equation} \label{CP} 
	V_t(x,y):=\essinf_{\xi\in\mathcal A(t,x)} \mathbb E\left[\left.\int_t^T\{\tfrac{1}{2}\eta\xi_s^2+\xi_sY_s+\tfrac{1}{2}\lambda_sX_s^2\}\,ds\,\right|\mathcal F_t\right]
\end{equation}
the \textit{value function} of the stochastic control problem \eqref{control-problem} with respect to the state dynamics 
\[
\left\{\begin{aligned}
	dX_s&=-\xi_s\,ds, && s\in[t,T], && X_t=x,\\
	dY_s&=\{-\rho_sY_s+\gamma\xi_s\}\,ds, && s\in[t,T], &&Y_t=y,
\end{aligned}\right.
\] 
where only those \textit{controls} or (\textit{trading}) \textit{strategies} $\xi\in L_{\mathcal F}^2(t,T;\mathbb R)$ belong to the class $\mathcal A(t,x)$ of \textit{admissible} controls  that satisfy the terminal state constraint 
\[
	X_T=0 \quad \text{a.s.}
\]

\begin{assumption}
	We assume throughout that the coefficients to the control problem satisfy
\[ 
	\eta>0, \gamma \in \mathbb R_+; \quad \rho, \lambda \in L^\infty_{\mathcal F}(0,T;\mathbb R_+).
\]
\end{assumption}

\begin{remark}
	Notice that $X,Y\in L^2_{\mathcal F}(\Omega;C([t,T];\mathbb R))$ for any (admissible) control as $\xi\in L^2_{\mathcal F}(t,T;\mathbb R)$ and  $\rho\in L^\infty_{\mathcal F}(0,T;\mathbb R_+).$
\end{remark}

We solve the control problem by solving the corresponding stochastic Hamilton-Jacobi-Bellman (HJB) equation. Stochastic HJB equations for non-Markovian control problems were first introduced by Peng~\cite{Peng92}. In our model the stochastic HJB equation is given by the first-order stochastic partial differential equation,
\begin{equation} \label{HJB} 
	-dV_t(x,y)=\inf_{\xi\in\mathbb R}\{-\xi\partial_x V_t(x,y)-(\rho_t y-\gamma\xi)\partial_y V_t(x,y)+\tfrac{1}{2}\eta\xi^2+\xi y+\tfrac{1}{2}\lambda_t x^2\}\,dt-Z_t(x,y)\,dW_t.
\end{equation}

\begin{definition}	
	A pair of random fields $(V,Z):\Omega\times[0,T)\times \mathbb R\times\mathbb R\rightarrow \mathbb R\times\mathbb R^m$ is called a \textit{classical solution} to the above equation if it satisfies the following conditions:
\begin{itemize}
	\item for each $t\in[0,T)$, $V_t(x,y)$ is continuously differentiable in $x$ and $y$,
	\item for each $(x,y)\in\mathbb R^2$, $(V_t(x,y),\partial_x V_t(x,y),\partial_y V_t(x,y))_{t\in[0,T)}\in L_{\mathcal F}^\infty(\Omega;C([0,T^-];\mathbb R^3))$,
	\item for each $(x,y)\in\mathbb R^2$, $(Z_t(x,y))_{t\in[0,T)}\in L^2_{\mathcal F}(0,T^-;\mathbb R^m)$,
	\item for all $0\leq t\leq s<T$ and $x,y\in\mathbb R$ it holds that
\begin{align*}
	 V_t(x,y) &=  V_s(x,y)+\int_t^s\inf_{\xi\in\mathbb R}\{-\xi\partial_x V_r(x,y)-(\rho_r y-\gamma\xi)\partial_y V_r(x,y)+\tfrac{1}{2}\eta\xi^2+\xi y+\tfrac{1}{2}\lambda_r x^2\}\,dr\\
	 &\quad -\int_t^sZ_r(x,y)\,dW_r.
\end{align*}
\end{itemize}
\end{definition}

We prove the existence of a unique classical solution to the equation \eqref{HJB} and show that the value function is given by the random field $V$. 
The linear-quadratic structure of the control problem suggest the ansatz 
\begin{equation} \label{LQ-ansatz} 
\begin{split}
	V_t(x,y) &=\tfrac{1}{2}A_tx^2+B_txy+\tfrac{1}{2}C_ty^2 \\
	Z_t(x,y) &=\tfrac{1}{2}Z_t^Ax^2+Z_t^Bxy+\tfrac{1}{2}Z_t^Cy^2
\end{split}
\end{equation}
for the solution $(V,Z)$ to the HJB equation. The following lemma shows that this ansatz reduces our HJB equation to the following three-dimensional stochastic Riccati equation: 
\begin{equation}  \label{BSDE-infty} 
\left\{
\begin{aligned}
-dA_t&=\left\{\lambda_t-\eta^{-1} (A_t-\gamma B_t)^2\right\}dt-Z_t^A\,dW_t\\
-dB_t&=\left\{-\rho_t B_t+\eta^{-1}(\gamma C_t-B_t+1)(A_t-\gamma B_t)\right\}dt-Z_t^B\,dW_t\\
-dC_t&= \left\{-2\rho_t C_t-\eta^{-1}(\gamma C_t-B_t+1)^2\right\}dt-Z_t^C\,dW_t.
\end{aligned}
\right.
\end{equation}

\begin{lemma}
	If the vector $$\left((A,B,C),(Z^A,Z^B,Z^C) \right)\in L_{\mathcal F}^\infty(\Omega;C([0,T^-];\mathbb R^3))\times L_{\mathcal F}^2(0,T^-;\mathbb R^{3\times m})$$ solves the BSDE system~\eqref{BSDE-infty}, then the random field $(V,Z)$ given by the linear-quadratic ansatz~\eqref{LQ-ansatz} is a classical solution to the HJB equation~\eqref{HJB} such that the infimum in~\eqref{HJB} is attained by
\begin{equation} \label{feedback-form}
	\xi^*_t(x,y)=\eta^{-1}(A_t-\gamma B_t)x-\eta^{-1}(\gamma C_t-B_t+1)y.
\end{equation}
\end{lemma}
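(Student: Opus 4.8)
The plan is to prove the lemma by direct substitution together with a completion-of-squares argument; no limiting or fixed-point machinery is needed here. First I would read off from the ansatz~\eqref{LQ-ansatz} that
\[
	\partial_xV_t(x,y)=A_tx+B_ty,\qquad \partial_yV_t(x,y)=B_tx+C_ty,
\]
which are polynomials in $(x,y)$ and hence continuously differentiable, so the first bullet in the definition of a classical solution is automatic. Substituting these into the Hamiltonian
\[
	H_t(\xi;x,y):=-\xi\,\partial_xV_t(x,y)-(\rho_ty-\gamma\xi)\,\partial_yV_t(x,y)+\tfrac12\eta\xi^2+\xi y+\tfrac12\lambda_tx^2
\]
yields a quadratic polynomial in $\xi$ with strictly positive leading coefficient $\tfrac12\eta>0$. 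Therefore the infimum over $\xi\in\mathbb R$ is attained at the unique stationary point, and solving $\partial_\xi H_t(\xi;x,y)=0$, i.e. $\eta\xi=(A_t-\gamma B_t)x+(B_t-\gamma C_t-1)y$, gives exactly the feedback control~\eqref{feedback-form}. This settles the attainment claim.

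Next I would complete the square, $H_t(\xi;x,y)=\tfrac12\eta\big(\xi-\xi_t^*(x,y)\big)^2+H_t\big(\xi_t^*(x,y);x,y\big)$, so that $\inf_{\xi\in\mathbb R}H_t(\xi;x,y)=H_t\big(\xi_t^*(x,y);x,y\big)$, and then evaluate the right-hand side by expanding $\eta\big(\xi_t^*(x,y)\big)^2=\eta^{-1}\big[(A_t-\gamma B_t)x-(\gamma C_t-B_t+1)y\big]^2$ and recombining with $-\rho_ty\,\partial_yV_t(x,y)=-\rho_tB_txy-\rho_tC_ty^2$. Collecting the coefficients of $x^2$, $xy$, $y^2$ gives
\[
	\inf_{\xi\in\mathbb R}H_t(\xi;x,y)=\tfrac12\big\{\lambda_t-\eta^{-1}(A_t-\gamma B_t)^2\big\}x^2+\big\{-\rho_tB_t+\eta^{-1}(\gamma C_t-B_t+1)(A_t-\gamma B_t)\big\}xy+\tfrac12\big\{-2\rho_tC_t-\eta^{-1}(\gamma C_t-B_t+1)^2\big\}y^2,
\]
that is, $\tfrac12x^2$, $xy$ and $\tfrac12y^2$ multiplied precisely by the drivers of $A$, $B$ and $C$ in~\eqref{BSDE-infty}.

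With this identity in hand the integral form follows immediately: for fixed $(x,y)$ I would multiply the three equations in~\eqref{BSDE-infty} by $\tfrac12x^2$, $xy$ and $\tfrac12y^2$ respectively and add them. Since $(A,B,C)\in L_{\mathcal F}^\infty(\Omega;C([0,T^-];\mathbb R^3))$ and $\rho,\lambda\in L^\infty_{\mathcal F}(0,T;\mathbb R_+)$, all three drivers are bounded on each $[0,T']$ with $T'<T$, so every integral over $[t,s]\subset[0,T)$ is well defined; the resulting $dr$-integrand equals $\inf_{\xi\in\mathbb R}H_r(\xi;x,y)$ by the previous step, and the $dW_r$-integrand equals $\tfrac12Z_r^Ax^2+Z_r^Bxy+\tfrac12Z_r^Cy^2=Z_r(x,y)$ by~\eqref{LQ-ansatz}. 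This is exactly the integral equation in the definition. The two remaining membership requirements hold because, for fixed $(x,y)$, the processes $V_t(x,y),\partial_xV_t(x,y),\partial_yV_t(x,y)$ are fixed linear combinations of $(A_t,B_t,C_t)$ and $Z_t(x,y)$ is a fixed linear combination of $(Z_t^A,Z_t^B,Z_t^C)$, with coefficients depending only on $(x,y)$, and the spaces $L_{\mathcal F}^\infty(\Omega;C([0,T^-];\mathbb R^3))$ and $L^2_{\mathcal F}(0,T^-;\mathbb R^m)$ are stable under such operations. There is no conceptual obstacle in this proof; the only step demanding care is the algebraic bookkeeping in the completion-of-squares computation, where one must verify that the cross terms recombine to give exactly the three drivers of~\eqref{BSDE-infty} and not merely something proportional to them.
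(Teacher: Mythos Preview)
Your proof is correct and follows essentially the same approach as the paper's: a completion-of-squares computation to find the minimizer $\xi^*$ and the minimum value of the Hamiltonian, followed by matching the resulting coefficients of $x^2$, $xy$, $y^2$ with the drivers of~\eqref{BSDE-infty}. You are in fact somewhat more thorough than the paper in explicitly checking all four bullets of the definition of a classical solution, but the core argument is identical.
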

\begin{proof}	
	Let us fix $(t,x,y)\in[0,T)\times\mathbb R\times\mathbb R$. The Hamiltonian 
\begin{align*}
	h(\xi)&=-\xi\partial_x V_t(x,y)-(\rho_t y-\gamma\xi)\partial_y V_t(t,x)+\tfrac{1}{2}\eta\xi^2+\xi y+\tfrac{1}{2}\lambda_tx^2\\
	&=\tfrac{1}{2}\eta^{-1}\left(\eta\xi-\partial_x V_t(x,y)+\gamma\partial_y V_t(t,x)+y\right)^2-\tfrac{1}{2}\eta^{-1}\left(\partial_xV_t(x,y)-\gamma\partial_yV_t(x,y)-y\right)^2\\
	&\quad-\rho_ty\partial_y V_t(x,y)+\tfrac{1}{2}\lambda_t x^2
\end{align*}
is minimized at 
\[
	\xi^*=\eta^{-1}(\partial_x V_t(x,y)-\gamma\partial_y V_t(t,x)-y).
\]
In terms of the linear-quadratic ansatz \eqref{LQ-ansatz}, we obtain \eqref{feedback-form} and
\begin{align*}
	h(\xi^*)&= -\tfrac{1}{2}\eta^{-1}((A_t-\gamma B_t)x-(\gamma C_t-B_t+1)y)^2-\rho_ty(B_tx+C_ty)+\tfrac{1}{2}\lambda_t x^2\\
	&= \tfrac{1}{2}(\lambda_t-\eta^{-1}(A_t-\gamma B_t)^2)x^2+(-\rho_tB_t+\eta^{-1}(A_t-\gamma B_t)(\gamma C_t-B_t+1))xy\\
	&\quad+\tfrac{1}{2}(-2\rho_t C_t-\eta^{-1}(\gamma C_t-B_t+1)^2)y^2. \qedhere
\end{align*}
\end{proof}

In order to guarantee the uniqueness of a solution to the HJB equation we need to impose a suitable terminal condition. Due to the terminal state constraint $X_T=0$ we expect the trading rate $\xi$ to tend to infinity for any non-trivial initial position as $t \to T$. 
%
	We further expect the resulting trading cost to dominate any resilience effect. As a result, we expect that  
\[
	V_t(x,y) \sim V^{\rho=0}_t(x,y) \quad \mbox{as } t \to T
\]	
where $V^{\rho=0}_t(x,y)$ denotes the value function corresponding to the control problem with $\rho\equiv0$. If $\rho\equiv0$, then $Y^{\rho=0}=y+\gamma (x-X)$ and
\[
	\int_t^T\xi_sY_s^{\rho=0}\,ds=xy+\tfrac{1}{2}\gamma x^2,
\]
independently of the strategy $\xi\in\mathcal A(t,x)$. Hence,
\begin{align*}
V^{\rho=0}_t(x,y)&=\essinf_{\xi\in\mathcal A(t,x)} \mathbb E\left[\left.\int_t^T\{\tfrac{1}{2}\eta\xi_s^2+\tfrac{1}{2}\lambda_sX_s^2\}\,ds\right|\mathcal F_t\right]+xy+\tfrac{1}{2}\gamma x^2\\
	&=\tfrac{1}{2}(\tilde A_t+\gamma)x^2+xy,
\end{align*}
where $\tilde A$ is characterized in \cite{AnkirchnerJeanblancKruse14,GraeweHorstSere13} as the unique solution to the BSDE with singular terminal value 
\[
	\left\{
		\begin{aligned}	
		-d\tilde A_t &=\{\lambda_t-\eta^{-1}\tilde A_t^2\}\,dt-Z_t^{\tilde A}\,dW_t, \\
		\lim_{t \to T}\tilde A_t& = \infty \quad \text{in } L^\infty. 
		\end{aligned} 
	\right. 
\]
We therefore expect the coefficients of the linear-quadratic ansatz~\eqref{LQ-ansatz} to satisfy 
\begin{equation} \label{singular-terminal-condition}
	(A_t,B_t,C_t)\longrightarrow (\infty,1,0) \quad \text{in $L^\infty$ as $t\rightarrow T$.}
\end{equation}

The next theorem establishes an existence of solutions result for the BSDE system \eqref{BSDE-infty} when imposed with the singular terminal condition \eqref{singular-terminal-condition}. The proof is given in Section~\ref{sec-existence}. It is based on a multi-dimensional generalization of the asymptotic expansion approached introduced in~\cite{GraeweHorstSere13}.

\begin{theorem} \label{thm-existence}
	The BSDE system~\eqref{BSDE-infty} imposed with the singular terminal condition~\eqref{singular-terminal-condition} admits at least one solution $$((A,B,C),(Z^A,Z^B,Z^C))\in L_{\mathcal F}^\infty(\Omega;C([0,T^-];\mathbb R^3))\times L_{\mathcal F}^2(0,T^-;\mathbb R^{3\times m}).$$
\end{theorem}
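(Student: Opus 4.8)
The plan is to follow the asymptotic expansion strategy of \cite{GraeweHorstSere13} adapted to the three-dimensional setting, in four stages. First, I would derive the precise blow-up rate of the singular component $A$ at the terminal time. Motivated by the expected asymptotics $(A_t,B_t,C_t)\to(\infty,1,0)$ and by the $\rho\equiv0$ reduction where $A$ behaves like $\tilde A_t\sim \eta/(T-t)$, I would posit the ansatz $A_t=\tfrac{\eta}{T-t}+a_t$, with $a$, $B$, $C$ bounded continuous processes having finite (rather than singular) terminal values. Substituting into \eqref{BSDE-infty}, the leading singular term $\eta/(T-t)$ cancels the $-\eta^{-1}(A_t-\gamma B_t)^2$ nonlinearity up to order $1/(T-t)$, leaving a transformed BSDE system for $(a,B,C)$ whose driver now has a singularity of order $1/(T-t)$ (in the coefficients multiplying $a$, $B-1$, $C$) but a \emph{finite} terminal value $(a_T,B_T,C_T)=(a_T,1,0)$, where $a_T$ is determined by the subleading asymptotics of $\tilde A$.

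Second — and this is where I expect the main obstacle — I would establish the a priori estimates announced in the introduction: deterministic two-sided bounds on $(A,B,C)$, equivalently on $(a,B,C)$. Since the system fails the quasi-monotonicity hypothesis of \cite{HuPeng06}, the stated device is to adjoin to the value-function BSDE the two further BSDEs governing the candidate optimal feedback coefficients $\eta^{-1}(A_t-\gamma B_t)$ and $\eta^{-1}(\gamma C_t-B_t+1)$ and to study the \emph{joint} five-dimensional (effectively three-dimensional after identification) dynamics, for which one can arrange a quasi-monotone comparison. Applying the multi-dimensional comparison principle recalled in the appendix against suitably chosen ODE super- and sub-solutions (using $\eta>0$, $\gamma\ge0$ and the essential boundedness of $\rho,\lambda$) yields, e.g., $0\le A_t-\gamma B_t$, control on the sign and size of $\gamma C_t-B_t+1$, and ultimately bounds of the form $0\le C_t\le c$, $|B_t-1|\le c$, $|a_t|\le c$ on $[0,T)$. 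Pinning down exactly which comparison functions work, and checking that the coupling terms retain the right monotonicity after the change of variables, is the delicate part.

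Third, with the a priori $L^\infty$-bounds in hand, I would solve the transformed BSDE system for $(a,B,C)$ on $[0,T]$ by a fixed-point argument: truncate the driver outside the established bounds so that it becomes globally Lipschitz away from $t=T$, use the local $L^\infty$-existence result for BSDEs with locally Lipschitz drivers from the appendix to get a solution on each $[0,T']$, and then pass to the limit $T'\to T$ using the uniform bounds and the singular-driver structure — the $1/(T-t)$ singularity in the driver is integrable against the bounded processes, so the stochastic integrals and Lebesgue integrals converge and produce a genuine solution on $[0,T)$ with the correct finite terminal values for $(a,B,C)$. The truncation is inactive by the a priori bounds, so this solves the original transformed system.

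Finally, I would undo the substitution: set $A_t:=\tfrac{\eta}{T-t}+a_t$ and keep $B,C$ as found. One checks directly that $((A,B,C),(Z^A,Z^B,Z^C))$ (with $Z^A$ the martingale integrand of $a$) solves \eqref{BSDE-infty} on $[0,T)$, lies in $L^\infty_{\mathcal F}(\Omega;C([0,T^-];\mathbb R^3))\times L^2_{\mathcal F}(0,T^-;\mathbb R^{3\times m})$, and that $A_t\to\infty$ in $L^\infty$ while $B_t\to1$, $C_t\to0$ in $L^\infty$, i.e.\ the singular terminal condition \eqref{singular-terminal-condition} holds. This gives the asserted existence; uniqueness and the optimality of \eqref{feedback-form} are deferred to the verification section. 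The crux throughout is Step two: getting the right a priori bounds for a system that is not quasi-monotone in its natural coordinates.
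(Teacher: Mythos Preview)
Your high-level strategy matches the paper's: asymptotic ansatz to trade the singular terminal value for a singular driver, a fixed-point argument for the transformed system, and the a priori estimates via the auxiliary feedback processes $D=\eta^{-1}(A-\gamma B)$ and $E=\eta^{-1}(\gamma C-B+1)$ to control the coupling. However, two points diverge from the paper in ways that matter.

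\textbf{The ansatz is not sharp enough.} You write $A_t=\tfrac{\eta}{T-t}+a_t$ with $a$ merely bounded. Substituting this into the $A$-equation, the leading $\eta/(T-t)^2$ indeed cancels, but the cross term produces $-\tfrac{2}{T-t}(a_t-\gamma B_t)$ in the driver for $a$. Since $a_t-\gamma B_t$ is only known to be bounded (not $O(T-t)$), this coefficient is genuinely of order $1/(T-t)$, which is \emph{not} integrable on $[0,T)$, contrary to your claim in Step~3. The paper avoids this by using a higher-order weighted ansatz,
\[
A_t=\frac{\eta}{T-t}+\frac{H_t}{(T-t)^2},\qquad B_t=1+\frac{G_t}{T-t},\qquad C_t=P_t,
\]
with $H,G,P=O((T-t)^2)$, and then works in the weighted space $\mathcal H$ with norm $\|(T-\cdot)^{-2}Y_\cdot\|_{L^\infty}$. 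The point (noted explicitly in the paper) is that this artificial raising of order is precisely what makes the Jacobian of the transformed driver \emph{bounded} on balls of $\mathcal H$, so that a contraction argument goes through on $[T-\delta,T]$. With your ansatz no such uniform Lipschitz bound is available.

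\textbf{The order of the construction.} The paper does \emph{not} solve on $[0,T']$ and pass $T'\to T$; that would require knowing the terminal values at $T'$, which you do not have. Instead it first builds a short-time solution on $[T-\delta,T]$ by the weighted fixed-point argument (no a priori estimates needed here), and only \emph{then} extends backward step by step on $[T-\delta-\delta',T-\delta]$, $[T-\delta-2\delta',T-\delta-\delta']$, etc., using the local $L^\infty$-existence lemma from the appendix. The a priori estimates enter exactly at this extension stage: they confine $(A,B,C)$ to a fixed compact region on every subinterval, so the local Lipschitz constant, and hence the step size $\delta'$, can be chosen uniformly and finitely many steps reach $t=0$. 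Your Step~2 is thus correctly identified as the crux, but it feeds the backward extension, not the construction near $T$.
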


The next theorem verifies the preceding heuristics; its proof is given in Section \ref{sec-verification}. In particular, it states that the value function is indeed of the form \eqref{LQ-ansatz}. As a result, there exists at most one solution to the BSDE system~\eqref{BSDE-infty} that satisfies~\eqref{singular-terminal-condition}. 

\begin{theorem} \label{thm-verification}
	Let  $((A,B,C),(Z^A,Z^B,Z^C))\in L_{\mathcal F}^\infty(\Omega;C([0,T^-];\mathbb R^3))\times L_{\mathcal F}^2(0,T^-;\mathbb R^{3\times m})$ be a solution to the BSDE system~\eqref{BSDE-infty} that satisfies the singular terminal condition~\eqref{singular-terminal-condition}. Then, the value function is of the linear-quadratic form \eqref{LQ-ansatz} and the optimal control is given by the feedback form \eqref{feedback-form}. In particular, the system admits at most one solution that satisfies~\eqref{singular-terminal-condition}.  
\end{theorem}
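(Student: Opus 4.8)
The plan is to run a standard verification argument adapted to the singular terminal value. Fix an initial state $(t,x,y)\in[0,T)\times\mathbb R\times\mathbb R$ and an admissible control $\xi\in\mathcal A(t,x)$, with associated state process $(X,Y)$. By the preceding lemma, the random field $V$ built from the linear-quadratic ansatz~\eqref{LQ-ansatz} is a classical solution of the HJB equation~\eqref{HJB} on $[0,T)$, so for $t\le s<T$ one can apply the Itô formula to $V_s(X_s,Y_s)=\tfrac12 A_sX_s^2+B_sX_sY_s+\tfrac12 C_sY_s^2$. Using $dX_s=-\xi_s\,ds$, $dY_s=\{-\rho_sY_s+\gamma\xi_s\}\,ds$ together with the dynamics of $A,B,C$ from~\eqref{BSDE-infty}, the drift of $V_s(X_s,Y_s)$ collapses, via the Hamiltonian identity $h(\xi^*)=\inf_\xi h(\xi)$ computed in the lemma, to
\[
	-\tfrac12\eta\,\xi_s^2-\xi_sY_s-\tfrac12\lambda_sX_s^2+\tfrac12\eta\,(\xi_s-\xi^*_s(X_s,Y_s))^2,
\]
where $\xi^*_s(X_s,Y_s)$ is the feedback control~\eqref{feedback-form}. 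Thus for any intermediate time $T'\in[t,T)$,
\[
	\mathbb E\!\left[V_{T'}(X_{T'},Y_{T'})\,\middle|\,\mathcal F_t\right]
	= V_t(x,y)-\mathbb E\!\left[\int_t^{T'}\!\{\tfrac12\eta\xi_s^2+\xi_sY_s+\tfrac12\lambda_sX_s^2\}\,ds\,\middle|\,\mathcal F_t\right]
	+\tfrac12\eta\,\mathbb E\!\left[\int_t^{T'}\!(\xi_s-\xi^*_s)^2\,ds\,\middle|\,\mathcal F_t\right],
\]
after checking that the stochastic integral $\int_t^{T'} X_s^2\,dZ^A_s$ etc. is a true martingale on $[t,T']$ (it is, since $X,Y\in L^2_{\mathcal F}(\Omega;C)$ are bounded on $[t,T']$ and $Z^A,Z^B,Z^C\in L^2_{\mathcal F}(0,T^-)$).

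The crux is then to pass to the limit $T'\to T$. Here the singular terminal condition~\eqref{singular-terminal-condition} enters: I expect to show $\liminf_{T'\to T}\mathbb E[V_{T'}(X_{T'},Y_{T'})\mid\mathcal F_t]\ge 0$ for every admissible $\xi$, with equality for an optimally chosen one. For the lower bound, on $\{X_T=0\}$ we have $X_{T'}\to 0$, and we need the blow-up of $A$ to be controlled relative to the decay of $X_{T'}^2$; the clean way is to write $\tfrac12 A_{T'}X_{T'}^2\ge \tfrac12(\tilde A_{T'}+\gamma)X_{T'}^2 + (B_{T'}-1)X_{T'}Y_{T'}+\text{(error)}$ using the a priori estimates of Section~\ref{sec-a-priori}, which should give $A\ge\tilde A$ up to bounded terms and $B\to1$, $C\to0$ boundedly; since $\tilde A$ is the singular-terminal-value solution associated with the $\rho\equiv0$ problem whose value function $\tfrac12(\tilde A_{T'}+\gamma)X_{T'}^2+xy-\text{(progressive terms)}$ is already known from~\cite{AnkirchnerJeanblancKruse14,GraeweHorstSere13} to satisfy the analogous nonnegativity-in-the-limit property, one inherits $\liminf_{T'\to T}\mathbb E[V_{T'}(X_{T'},Y_{T'})\mid\mathcal F_t]\ge 0$. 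Combining with the identity above yields
\[
	V_t(x,y)\le \mathbb E\!\left[\int_t^T\{\tfrac12\eta\xi_s^2+\xi_sY_s+\tfrac12\lambda_sX_s^2\}\,ds\,\middle|\,\mathcal F_t\right]
\]
for all $\xi\in\mathcal A(t,x)$, i.e.\ $V_t(x,y)\le\essinf_{\xi\in\mathcal A(t,x)}(\cdots)$.

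For the reverse inequality I would close the loop by exhibiting the feedback strategy $\xi^*_s=\xi^*_s(X^*_s,Y^*_s)$ from~\eqref{feedback-form} as admissible and optimal. First solve the linear SDE for $(X^*,Y^*)$ driven by the (progressively measurable, $L^\infty$ on $[0,T^-]$ but singular at $T$) coefficients $\eta^{-1}(A-\gamma B)$ and $\eta^{-1}(\gamma C-B+1)$; the a priori estimates should give $A_t-\gamma B_t\sim \eta/(T-t)$ as $t\to T$, so $X^*$ solves $dX^*_s=-\eta^{-1}(A_s-\gamma B_s)X^*_s\,ds+\eta^{-1}(\gamma C_s-B_s+1)Y^*_s\,ds$ and behaves like $X^*_s\sim (T-s)\cdot(\text{const})$, forcing $X^*_T=0$ and $\xi^*\in L^2_{\mathcal F}(t,T;\mathbb R)$ — this verifies $\xi^*\in\mathcal A(t,x)$. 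With $\xi=\xi^*$ the penalty term $\tfrac12\eta\mathbb E[\int_t^{T'}(\xi_s-\xi^*_s)^2ds\mid\mathcal F_t]$ vanishes, and one checks $\mathbb E[V_{T'}(X^*_{T'},Y^*_{T'})\mid\mathcal F_t]\to 0$ (again using $A_{T'}(X^*_{T'})^2\to0$ from the matched asymptotics), giving
\[
	V_t(x,y)=\mathbb E\!\left[\int_t^T\{\tfrac12\eta(\xi^*_s)^2+\xi^*_sY^*_s+\tfrac12\lambda_s(X^*_s)^2\}\,ds\,\middle|\,\mathcal F_t\right].
\]
Hence $V_t(x,y)$ equals the value function, which is therefore of the form~\eqref{LQ-ansatz} with optimal control~\eqref{feedback-form}. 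Uniqueness of the BSDE solution satisfying~\eqref{singular-terminal-condition} is then immediate: $A_t,B_t,C_t$ are recovered as $\partial^2_{xx}V_t$, $\partial^2_{xy}V_t$, $\partial^2_{yy}V_t$ of the (unique) value function, and the martingale representation theorem pins down $(Z^A,Z^B,Z^C)$.

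The main obstacle is the limit $T'\to T$ — specifically, controlling the blow-up of $A_{T'}$ against $X^*_{T'}\to0$ and, for general admissible $\xi$, establishing the $\liminf\ge0$ lower bound. This requires the sharp terminal asymptotics $A_t=\tfrac{\eta}{T-t}+o\!\big(\tfrac1{T-t}\big)$, $B_t\to1$, $C_t\to0$ from Section~\ref{sec-a-priori}; without the matched expansion one cannot distinguish the admissible strategies that genuinely liquidate from those that merely satisfy $X_T=0$ in a degenerate way, and the verification boundary term would be uncontrolled. Everything else is a routine Itô/martingale computation.
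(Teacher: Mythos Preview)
Your overall plan---apply It\^o to $V_s(X_s,Y_s)$, pass to the limit $T'\to T$, then show $\xi^*$ is admissible and attains the bound---matches the paper's. But the heart of your terminal-limit argument has the inequality backwards, and this breaks the proof.

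From your identity
\[
V_t(x,y)=\mathbb E[V_{T'}(X_{T'},Y_{T'})\mid\mathcal F_t]+\mathbb E\!\left[\int_t^{T'}\!\{\tfrac12\eta\xi_s^2+\xi_sY_s+\tfrac12\lambda_sX_s^2\}\,ds\,\middle|\,\mathcal F_t\right]-\tfrac{\eta}{2}\,\mathbb E\!\left[\int_t^{T'}\!(\xi_s-\xi^*_s)^2\,ds\,\middle|\,\mathcal F_t\right],
\]
dropping the nonnegative penalty gives $V_t(x,y)\le \mathbb E[V_{T'}\mid\mathcal F_t]+\mathbb E[\text{cost on }[t,T']\mid\mathcal F_t]$. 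To conclude $V_t(x,y)\le\mathbb E[\text{total cost}\mid\mathcal F_t]$ you therefore need $\limsup_{T'\to T}\mathbb E[V_{T'}\mid\mathcal F_t]\le0$, \emph{not} $\liminf\ge0$. Your comparison with the $\rho\equiv0$ problem bounds $\tfrac12A_{T'}X_{T'}^2$ from \emph{below}, which goes the wrong way; and since the quadratic form $V_s(x,y)$ is indefinite ($C_s\le0$), there is no sign to fall back on.

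The paper's remedy (Lemma~\ref{lemma-vanishing}) is both simpler and stronger: it proves $\mathbb E[V_s(X_s,Y_s)\mid\mathcal F_t]\to0$ for \emph{every} admissible $\xi$. The $B_sX_sY_s$ and $C_sY_s^2$ terms vanish by dominated convergence since $B,C$ are bounded with $X_T=C_T=0$ and $X,Y\in L^2_{\mathcal F}(\Omega;C)$. For the singular term one uses Jensen, $X_s^2=(\int_s^T\xi_r\,dr)^2\le(T-s)\int_s^T\xi_r^2\,dr$, so that
\[
\mathbb E[A_sX_s^2\mid\mathcal F_t]\le\mathbb E\!\left[(T-s)A_s\int_s^T\xi_r^2\,dr\,\middle|\,\mathcal F_t\right]\to0
\]
because $(T-s)A_s=\eta+O(T-s)$ is bounded (Corollary~\ref{cor-asymptotics}). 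No detour through $\tilde A$ is needed.

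A secondary gap: your claim that the stochastic integral is a true martingale ``since $X,Y\in L^2_{\mathcal F}(\Omega;C)$ are bounded on $[t,T']$'' is incorrect---$X,Y$ are in $L^2$, not $L^\infty$, so $X^2Z^A$ need not be square-integrable. The paper localizes with $\tau_n=\inf\{s:|X_s|\vee|Y_s|\ge n\}$, takes expectations, and passes $n\to\infty$ by dominated convergence before sending $s\to T$. Your sketch of the admissibility of $\xi^*$ has the right intuition ($X^*_s=O(T-s)$), though the paper needs an iterated Gronwall inequality (Lemma~\ref{lemma-iterated-gronwall}) to control the coupling between $X^*$ and $Y^*$ rigorously.
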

%
%
%
%

\par\bigskip

\begin{example}
In a deterministic benchmark model with a risk neutral investor $(\lambda \equiv 0)$ and constant deterministic resilience $(\rho_t \equiv \rho > 0)$ the above BSDE system reduces to the following ODE system:
\begin{equation*}
\left\{
\begin{aligned}
-\dot A_t&=-\eta^{-1}(A_t-\gamma B_t)^2, && 0\leq t<T;\qquad \lim_{t\rightarrow T} A_t=+\infty;\\
-\dot B_t&=-\rho B_t+\eta^{-1}(\gamma C_t-B_t+1)(A_t-\gamma B_t), && 0\leq t<T;\qquad \lim_{t\rightarrow T} B_t=1;\\
-\dot C_t&=-2\rho C_t-\eta^{-1}(\gamma C_t-B_t)^2, && 0\leq t<T;\qquad \lim_{t\rightarrow T} C_t=0.
\end{aligned}
\right.
\end{equation*}
Using the asymptotic expansion introduced in \eqref{asymptotic-ansatz} the above ODE system can be solved by solving the corresponding ODE system \eqref{BSDE_0}. That ODE system has finite terminal values yet singular nonlinearity. It can be solved numerically using the MATLAB package \texttt{bvpsuite}~\cite{Kitzhoferetal10}. This package is designed for solving ODE systems with regular singular points. The optimal trading strategies for different choices of the instantaneous impact factor $\eta$ and the optimal trading strategies of the benchmark models by Almgren and Chriss~\cite{AlmgrenChriss00} and Obizhaeva and Wang~\cite{ObizhaevaWang13} are depicted in Figure~1. 
\begin{figure}[!htb]\label{figure}
	\centering
	\includegraphics[width=0.7\textwidth]{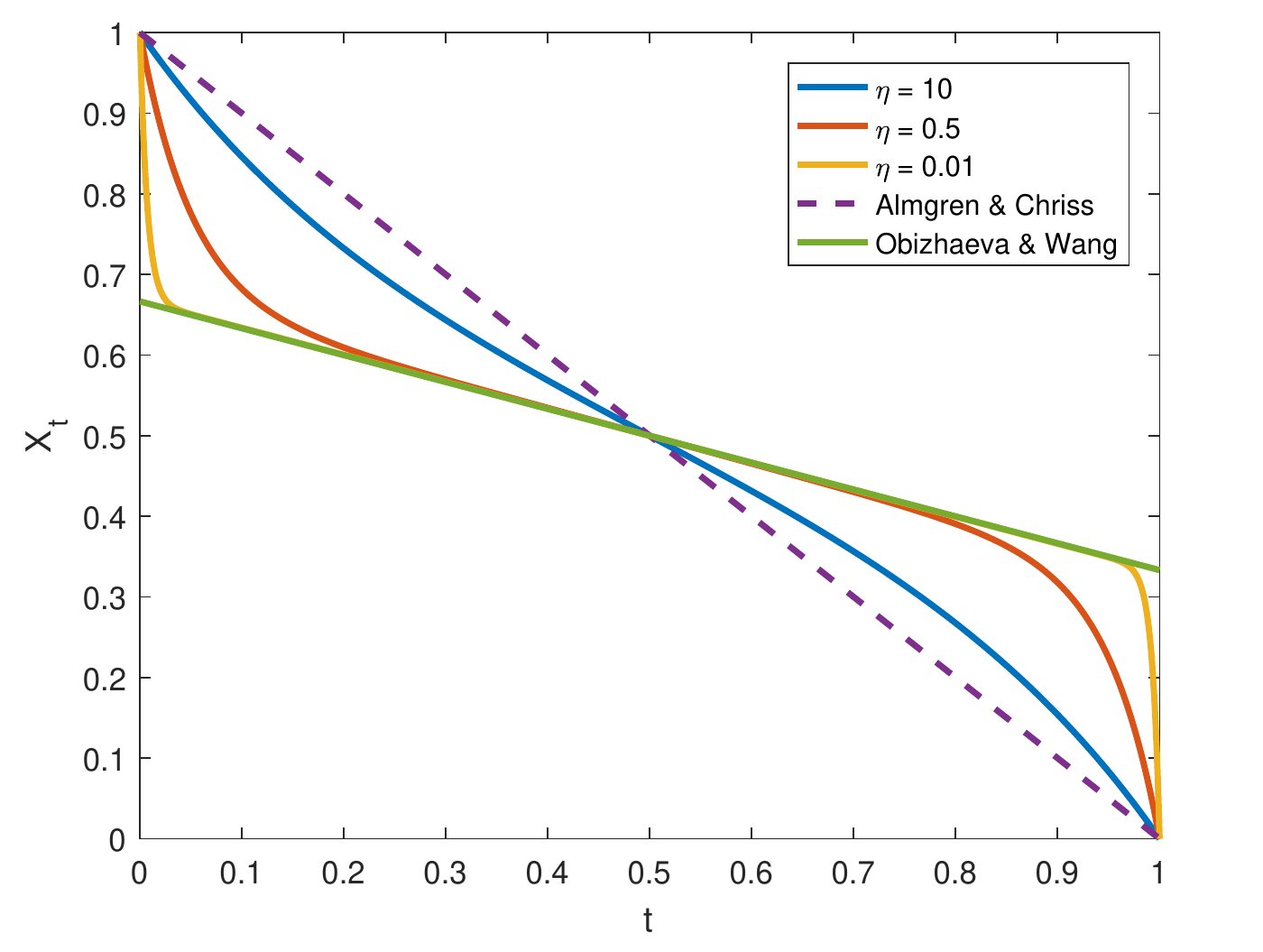}
	\caption{The optimal trading strategy in a deterministic benchmark model for different instantaneous impact factors $\eta$ compared to the models by Almgren and Chriss~\cite{AlmgrenChriss00} and by Obizhaeva and Wang~\cite{ObizhaevaWang13} for $x=1$,  $y=0$, $\lambda\equiv0$, $\gamma=100$, $T=1$, and $\rho\equiv1$.}
\end{figure}
As we see, the optimal trading strategy resembles that of the Almgre and Chriss model for large instantaneous impact factors while it resembles that of the Obizhaeva and Wang model with singular controls for small instantaneous impact factors. This suggests that our model can be viewed as a blend of the two extreme cases with only instantaneous, respectively, only persistent market impact.

\end{example}

\section{A Priori Estimates} \label{sec-a-priori}

In this section we establish a priori estimates for the BSDE system \eqref{BSDE-infty}. The estimates will be key for both, the proof of the existence of solutions and the verification theorem. Throughout, let 
\[
	((A,B,C),(Z^A,Z^B,Z^C))\in L_{\mathcal F}^\infty(\Omega;C([0,T^-];\mathbb R^3))\times L_{\mathcal F}^2(0,T^-;\mathbb R^{3\times m})
\]
denote any solution to \eqref{BSDE-infty} that satisfies \eqref{singular-terminal-condition}. 
It will be convenient to also consider the processes 
\[
	D:=\eta^{-1}(A- \gamma B)\qquad \text{and} \qquad	E:=\eta^{-1}(\gamma C-B+1)
\] 
that appear in the feedback form \eqref{feedback-form} of the candidate optimal control. The equations for $D$ and $E$ read:
\[
-dD_t=\{\eta^{-1}\lambda_t-D_t^2+\eta^{-1}\gamma\rho_tB_t-\gamma E_tD_t\}\,dt-Z^D_t\,dW_t
\]
and
\begin{align*}
	-dE_t 
	&=\{2\eta^{-1}\rho_t-2\rho_tE_t- \gamma E_t^2+\eta^{-1}\rho_tB_t-E_tD_t\}\,dt-Z_t^E\,dW_t\\ 
	 &=\{\eta^{-1}\rho_t(1-\gamma C_t)-\rho_tE_t-\gamma E_t^2-E_tD_t\}\,dt-Z_t^E\,dW_t.
\end{align*}

In order to establish the a priori estimates we first determine the range of the processes $A, \ldots, E$. The proof of the following lemma uses the multi-dimensional comparison principle for BSDEs, due to Hu and Peng~\cite{HuPeng06} presented in the Appendix. 

\begin{lemma} \label{lemma-nonnegative}
	It holds that $A,D\geq 0$ and $B,-\gamma C,\eta E\in[0,1]$, $d\mathbb P\times dt$-a.e.
\end{lemma}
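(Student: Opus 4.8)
The plan is to use the multi-dimensional comparison principle from the appendix, but applied with care because the BSDE system for $(A,B,C)$ itself does not satisfy the quasi-monotonicity condition. The right objects to compare are the processes $A, B, C, D, E$ together (or suitable subcollections), since the signs and bounds feed into each other. I would proceed by a sequence of comparisons, at each stage using bounds already established and treating the remaining couplings as known coefficients so that quasi-monotonicity holds for the relevant pair.

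\textbf{Nonnegativity of $A$ and the bound $\eta E \ge 0$.}
First I would show $A\ge 0$. Freezing the (already bounded) coefficient $D=\eta^{-1}(A-\gamma B)$, the equation for $A$ reads $-dA_t=\{\lambda_t-D_t(A_t-\gamma B_t)\}\,dt-Z^A_t\,dW_t$, which is linear in $A$ with bounded coefficients and nonnegative source $\lambda_t$ plus a term $\gamma D_t B_t$ whose sign we do not yet control; so instead I would use the genuinely one-dimensional comparison against the zero solution only after also pinning down the sign of $B$. The cleaner route: consider the pair $(A,B)$ (or $(A,\eta E)$) and apply the two-dimensional comparison principle of Hu–Peng, whose quasi-monotonicity requirement one checks directly on the drivers in \eqref{BSDE-infty}. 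The terminal conditions $A_T=\infty\ge 0$ and $B_T=1\in[0,1]$, $C_T=0$, and the structure of the drivers, should give $A\ge 0$ and the one-sided bounds $B\ge 0$, $\gamma C\le 0$, $\eta E\ge 0$ by comparison with the constant sub-/supersolutions $0$ and the appropriate constants. Concretely, one verifies that $(0,0,0)$ (resp.\ the vector of upper constants) is a subsolution (resp.\ supersolution) of the relevant frozen-coefficient system, then invokes Theorem in the Appendix.

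\textbf{The upper bounds $B\le 1$, $-\gamma C\le 1$, $\eta E\le 1$ and nonnegativity of $D$.}
Having $A\ge 0$ and the lower bounds, I would next bound $D$ from below by $0$: rewrite the $D$-equation as $-dD_t=\{\eta^{-1}\lambda_t+\eta^{-1}\gamma\rho_t B_t-D_t(D_t+\gamma E_t)\}\,dt-Z^D_t\,dW_t$; since $B\ge 0$, $\lambda\ge 0$, $\rho\ge 0$, the source is nonnegative and the coefficient of $D$ (namely $-(D_t+\gamma E_t)$, with $E\ge 0$) makes $0$ a subsolution, so $D\ge 0$ by one-dimensional comparison. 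For the upper bounds I would use the representation $\eta E=\gamma C-B+1$, so $\eta E\le 1$ is equivalent to $B-\gamma C\ge 0$, i.e.\ both $B\ge 0$ and $-\gamma C\ge 0$, which we already have — hence $\eta E\le 1$ is automatic once the lower bounds are in place; wait, that is not quite an inequality in the right direction, so more carefully: $\eta E = \gamma C - B + 1 \le 1 \iff \gamma C \le B$, which follows from $\gamma C\le 0\le B$. Similarly $B\le 1 \iff \gamma C \ge \eta E - 1 \ge -1$, i.e.\ $-\gamma C\le 1$, so $B\le 1$ and $-\gamma C\le 1$ are equivalent to each other plus $\eta E\ge 0$, and all reduce to showing $-\gamma C\le 1$. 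That last bound I would get from the $C$-equation $-dC_t=\{-2\rho_t C_t-\eta\gamma^{-2}\cdot\ldots\}$ — more simply, from the equation for $-\gamma C$, using that its driver, with $E\ge 0$ frozen, has $-\gamma C \equiv 1$ essentially as a supersolution given the terminal value $-\gamma C_T=0\le 1$ and the sign of $\rho$; a one-dimensional comparison then closes it.

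\textbf{Main obstacle.}
The genuine difficulty is exactly the failure of quasi-monotonicity for the native system \eqref{BSDE-infty}, flagged in the introduction: one cannot simply compare $(A,B,C)$ componentwise with constant vectors. The plan's workaround — enlarging to the system including $D$ and $E$, and ordering the comparisons so that each step freezes previously-bounded processes as coefficients — must be executed so that at every stage the two (or three) processes being compared do form a quasi-monotone system with the frozen coefficients. Verifying these quasi-monotonicity conditions for the drivers of $(A,B)$, of $(-\gamma C, \eta E)$, and of the scalar equations for $D$, is the technical heart of the argument; it is a finite check but requires choosing the right subcollections and the right frozen coefficients, and getting the direction of every inequality ($\rho\ge 0$, $\lambda\ge 0$, $\gamma\ge 0$, $\eta>0$) consistent with the signs of the terminal data in \eqref{singular-terminal-condition}. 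Once the comparisons are set up correctly, each individual invocation of the Hu–Peng comparison principle is routine.
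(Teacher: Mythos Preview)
Your overall instinct --- enlarge the system to include $D$ and $E$ and bootstrap signs in a carefully chosen order --- is exactly right, and your final step (deducing $B,-\gamma C,\eta E\le 1$ algebraically from $\eta E=\gamma C-B+1$ once the nonnegativity is in hand) matches the paper. But the bootstrap sequence you propose does not close.

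You begin by trying to get $A\ge 0$ (and the lower bounds on $B$, $\eta E$) via a two-dimensional comparison for $(A,B)$ or $(A,\eta E)$, asserting that ``one checks directly'' the quasi-monotonicity. This check fails: the $A$-driver has $\partial_B\bigl(\lambda_t-\eta^{-1}(A-\gamma B)^2\bigr)=2\gamma D$, and the $B$-driver has $\partial_A\bigl(-\rho_t B+\eta E_t D_t\bigr)=E$ (after the obvious rewriting), so quasi-monotonicity of $(A,B)$ needs $D\ge 0$ and $E\ge 0$ --- precisely the signs you are trying to establish. The same circularity infects your one-dimensional argument for $D\ge 0$, which explicitly uses $B\ge 0$ and $E\ge 0$ as inputs. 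No ordering of these comparison steps breaks the loop, because every cross-derivative you need has sign governed by a process whose sign is still unknown.

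The paper's way out is to start not with a comparison argument but with the \emph{explicit solution formula for linear BSDEs}. Freeze $E$ as a bounded coefficient in the $C$-equation; it becomes linear in $C$ with terminal value $0$ and nonpositive source $-\eta E_t^2$, so the representation
\[
C_t=-\mathbb E\Bigl[\int_t^T \eta E_s^2\,e^{-\int_t^s 2\rho_r\,dr}\,ds\Bigm|\mathcal F_t\Bigr]
\]
gives $C\le 0$ directly, no sign information on $E$ required. Then the $E$-equation, viewed as linear in $E$ with source $\eta^{-1}\rho(1-\gamma C)\ge 0$ (now using $C\le 0$), yields $E\ge 0$ by the same device --- with a little care because the linear coefficient involves the singular $D$, handled by working on $[0,\tau]$ and letting $\tau\to T$. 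Only now, with $E\ge 0$ secured, does the paper invoke Hu--Peng comparison, and it does so for the pair $(B,D)$, whose quasi-monotonicity (cross-derivatives $\eta E$ and $\eta^{-1}\gamma\rho$) is now guaranteed. Nonnegativity of $A=\eta D+\gamma B$ then falls out for free. So the missing idea is: bootstrap $C\le 0$ and $E\ge 0$ via linear representation formulas rather than comparison, and apply the multi-dimensional comparison only to $(B,D)$.
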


\begin{proof}
	We first note that $B,C\in L^\infty_{\mathcal F}(\Omega;C([0,T^-];\mathbb R))$ together with the $L^\infty$-convergence of $B_t$ and $C_t$ as $t\rightarrow T$ implies $B,C\in L^\infty_{\mathcal F}(\Omega;C([0,T];\mathbb R))$ and hence $E\in L^\infty_{\mathcal F}(\Omega;C([0,T];\mathbb R))$.
	
	The nonpositivity of $C$ follows from the solution formula for linear BSDEs with essentially bounded coefficients \cite[Proposition~5.31]{PardouxRascanu14}. Indeed, from 
\[
\left\{
\begin{aligned}
		-d C_t &=\{-2\rho_t C_t-\eta E_t^2\}\,dt-Z^C_t\,dW_t, \quad 0\leq t<T,\\
		 C_T& = 0, 
		\end{aligned} 
	\right. 
\]	
we obtain that
\begin{equation}  \label{solution-formula-c}
C_t=-\mathbb E\left[\left.\int_t^T \eta E_s^2e^{-\int_t^s2\rho_r\,dr}\,ds\right|\mathcal F_t\right]\leq0.
\end{equation}

	The non-negativity of $E$ follows from similar arguments. In fact, 
\[ 
	-dE_t=\{-(\rho_t+\gamma E_t+D_t)E_t+\eta^{-1}\rho_t(1-\gamma C_t)\}dt-Z_t^E\,dW_t, \quad 0\leq t<T.
\]
Even though $D_t$ is singular at $t=T$, we may apply the solution formula on $[0,\tau]$ for all $\tau < T$. This yields,
\begin{equation} \label{linear-rep-E}
	E_t=\mathbb E\left[\left. E_\tau e^{-\int_t^\tau (\rho_r+\gamma E_r+D_r)\,dr}+\int_t^\tau\eta^{-1} \rho_s(1-\eta C_s)e^{-\int_t^s (\rho_r+\gamma E_r+D_r)\,dr}\,ds\right|\mathcal F_t\right].
\end{equation}
The $L^\infty$-convergence of $D_t$ to $\infty$ as $t\rightarrow T$ together with the fact that $D\in L^\infty_{\mathcal F}(\Omega;C([0,T^-];\mathbb R))$ implies that $D$ is essentially bounded below on $[0,T]$. Since $\rho, C$ and $E$ are essentially bounded we can apply the dominated convergence theorem to interchange the limit and the expectation in \eqref{linear-rep-E} when letting $\tau\rightarrow T$ in \eqref{linear-rep-E}. As a result, $E \geq 0$ because $C \leq 0$ and because 
\[
	E_\tau \to E_T=\eta^{-1}(\gamma C_T-B_T+1)=0 \quad \mbox{as } \tau \to T.
\]

	In order to prove that $B,D\geq 0$ we need we need to consider their \textit{joint} dynamics. First, due to the (improper) $L^\infty$-convergence of $B_t$ and $D_t$ as $t\rightarrow T$ there exists a deterministic time $\tau<T$ such that $B,D\geq 0$ on $[\tau,T]$. Let us consider the BSDE system for $B$ and $D$ on $[0,\tau]$:
\begin{equation*}
\left\{
\begin{aligned}
-dB_t&=\left\{-\rho_t B_t+\eta E_tD_t\right\}dt-Z_t^B\,dW_t\\
-dD_t&=\left\{\eta^{-1}\lambda_t-D_t^2+\eta^{-1}\gamma \rho_t B_t-\gamma E_tD_t\right\}dt-Z_t^D\,dW_t. \\
\end{aligned}
\right. 
\end{equation*}
Since $\rho$, $E$, and $D$ are essentially bounded on $[0,\tau]$ we may assume without loss of generality by a standard truncation argument in the $D$-variable that this system is $d\mathbb P\times dt$-a.e.~uniformly Lipschitz continuous in $B$ and $D$. Furthermore, the system is quasi-monotone because $E,\rho\geq0$. Hence, we may apply the comparison theorem for multi-dimensional BSDEs given in Proposition~\ref{prop-cp} in the Appendix with 
\begin{align*}
	f^1(t,B,D) & =(-\rho_t B+\eta E_t D, -D^2+\eta^{-1}\gamma\rho_t B-\gamma E_t D)\\
	f^2(t,B,D) &=(-\rho_t B+\eta E_t D, \eta^{-1} \lambda_t -D^2+\eta^{-1}\gamma\rho_t B-\gamma E_t D)
\end{align*}	
(up to truncation in $D$) and terminal conditions $Y^1_\tau = (0,0)$ and $Y^2_\tau = (B_\tau,D_\tau) \geq Y^1_\tau$, respectively. As the unique solution to the first BSDE system satisfies $Y^1_t \equiv (0,0)$, we see that $(B_t,D_t) = Y^2_t \geq (0,0)$ for all $t \in [0,\tau]$. Hence the process $(B,D)$ is non-negative. 
	
	Finally, we conclude from $B,-\gamma C,\eta E\geq 0$ and $\eta E=\gamma C-B+1$ that $B,-\gamma C,\eta E\leq 1$.
\end{proof}

	We are now ready to establish the a priori estimates. 
	
\begin{proposition} \label{prop-a-priori-estimates}
	In terms of $\kappa:=\sqrt{2\eta^{-1}\max\{\|\lambda\|_{L^\infty},\gamma\|\rho\|_{L^\infty}\}}$ the following a priori estimates hold $d\mathbb P\times dt$-a.e.:
\begin{equation*} 
\begin{aligned}
	\underline D_t:=\frac{\eta^{-1}\gamma}{e^{\eta^{-1}\gamma (T-t)}-1}&\leq D_t\leq \kappa \coth\left(\kappa(T-t)\right)=:\overline D_t, \\
	\underline B_t:=e^{-\|\rho\|_{L^\infty}(T-t)}&\leq B_t\leq 1,\\
	0&\leq E_t\leq \gamma^{-1} \kappa \tanh(\kappa(T-t))=:\overline E_t
\end{aligned}
\end{equation*}
\end{proposition}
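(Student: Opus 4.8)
The plan is to derive each estimate by comparing the relevant component of the BSDE system to an autonomous one-dimensional Riccati ODE, using the sign and boundedness information already extracted in Lemma~\ref{lemma-nonnegative} together with the one-dimensional comparison principle for BSDEs with a singular terminal value (as established for the scalar equation $-d\tilde A_t=\{\lambda_t-\eta^{-1}\tilde A_t^2\}dt-Z^{\tilde A}_t\,dW_t$ in \cite{AnkirchnerJeanblancKruse14,GraeweHorstSere13}). Concretely, I would first rewrite the equations for $D$ and $E$ in ``closed'' scalar form by absorbing the cross terms into coefficients whose signs are known: from Lemma~\ref{lemma-nonnegative} we have $B\in[0,1]$, $-\gamma C\in[0,1]$, $E\ge0$, $D\ge0$, so the driver of $D$,
\[
\eta^{-1}\lambda_t - D_t^2 + \eta^{-1}\gamma\rho_t B_t - \gamma E_t D_t,
\]
is bounded above by $\eta^{-1}\lambda_t+\eta^{-1}\gamma\rho_t - D_t^2 \le \tfrac12\kappa^2 - D_t^2$ and bounded below by $-D_t^2 - \gamma E_t D_t$; symmetrically for $E$, using the second form of its driver, $\eta^{-1}\rho_t(1-\gamma C_t)-\rho_t E_t-\gamma E_t^2-E_t D_t$, the nonnegativity of $1-\gamma C_t\le 2$ bounds it above by $2\eta^{-1}\rho_t - \gamma E_t^2$ (dropping the nonnegative terms $\rho_t E_t$, $E_t D_t$) and below by $0$ (already shown).

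Next I would invoke the comparison principle against explicit ODE solutions. For the upper bound on $D$: the function $\overline D_t=\kappa\coth(\kappa(T-t))$ solves $-\dot{\overline D}_t = \tfrac12\kappa^2 - \overline D_t^2$ with $\overline D_t\to+\infty$ as $t\to T$ (one checks $\tfrac{d}{dt}\coth(\kappa(T-t))=\kappa\,\mathrm{csch}^2(\kappa(T-t))$ and $\coth^2-\mathrm{csch}^2=1$, so $-\dot{\overline D}=\kappa^2\mathrm{csch}^2(\kappa(T-t))=\kappa^2(\coth^2-1)=\kappa^2\cdot\tfrac{\overline D^2}{\kappa^2}-\kappa^2$, wait — this gives $-\dot{\overline D}=\overline D^2-\kappa^2$, not $\tfrac12\kappa^2-\overline D^2$; I would instead note $\overline D$ is a \emph{supersolution} of the $D$-equation since $-\dot{\overline D}+\overline D^2 = \kappa^2 \ge \tfrac12\kappa^2 \ge$ the remaining part of the $D$-driver, and apply the comparison theorem on $[0,\tau]$ letting $\tau\to T$, using the improper $L^\infty$-convergence $D_t\to\infty=\overline D_{T^-}$ to get the ordering from the terminal time). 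The lower bound $\underline D_t=\eta^{-1}\gamma/(e^{\eta^{-1}\gamma(T-t)}-1)$ solves the Bernoulli-type equation $-\dot{\underline D}=-\underline D^2-\gamma\underline D\cdot(\text{something})$; more precisely, since $\eta E\le 1$ (hence $\gamma E\le \eta^{-1}\gamma$) the lower driver of $D$ dominates $-D^2-\eta^{-1}\gamma D$, and $\underline D$ should be taken as the solution of $-\dot{\underline D}=-\underline D^2-\eta^{-1}\gamma\underline D$ with $\underline D_{T^-}=+\infty$; this linear-in-$1/\underline D$ ODE integrates to exactly the stated formula. The bound $\underline B_t=e^{-\|\rho\|_{L^\infty}(T-t)}$ follows from the linear representation of $B$: since $-dB_t=\{-\rho_t B_t+\eta E_t D_t\}dt-Z^B_t\,dW_t$ with $\eta E_t D_t\ge0$ and $B_T=1$, the solution formula gives $B_t = \mathbb E[e^{-\int_t^T\rho_r\,dr} + \int_t^T \eta E_s D_s e^{-\int_t^s\rho_r\,dr}\,ds\mid\mathcal F_t]\ge e^{-\|\rho\|_{L^\infty}(T-t)}$; the upper bound $B\le1$ is from Lemma~\ref{lemma-nonnegative}. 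The upper bound $\overline E_t=\gamma^{-1}\kappa\tanh(\kappa(T-t))$ solves $-\dot{\overline E}=\tfrac12\eta^{-1}\cdot 2\|\rho\|_{L^\infty}$-type — actually one checks $-\dot{\overline E}= \kappa^2\gamma^{-1}(1-\tanh^2) = \gamma^{-1}\kappa^2 - \gamma\overline E^2$, and $\gamma^{-1}\kappa^2 = \gamma^{-1}\cdot 2\eta^{-1}\max\{\ldots\}\ge 2\eta^{-1}\|\rho\|_{L^\infty}\ge 2\eta^{-1}\rho_t$, so $\overline E$ is a supersolution of $-\dot{\overline E}=2\eta^{-1}\rho_t-\gamma\overline E^2$ with $\overline E_T=0$, and comparison gives $E\le\overline E$.

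Concretely the steps are: (i) quote Lemma~\ref{lemma-nonnegative} to fix all signs; (ii) for each of $D$ (lower), $B$ (lower) derive a \emph{linear} BSDE inequality and apply the linear-BSDE representation formula \cite[Prop.~5.31]{PardouxRascanu14} exactly as in the proof of Lemma~\ref{lemma-nonnegative}; (iii) for $D$ (upper) and $E$ (upper) derive a scalar \emph{super}solution of a Riccati-type ODE with the appropriate (finite or singular) terminal value, then apply the one-dimensional comparison principle on $[0,\tau]$ and pass $\tau\to T$, using for the $D$-upper-bound case the improper $L^\infty$-convergence of $D_t$ so that the terminal ordering $D_{T^-}=\infty=\overline D_{T^-}$ propagates backward (this is the same device used for $E\ge0$ in Lemma~\ref{lemma-nonnegative}); (iv) verify by direct differentiation that the four closed-form functions $\underline D,\overline D,\underline B,\overline E$ solve the stated ODEs with the stated boundary behavior. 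The main obstacle is the upper bound on $D$: because $D$ blows up at $T$, one cannot simply compare on $[0,T]$, and one must argue that $\overline D$, which also blows up at $T$, dominates $D$ near $T$ — this requires a careful local analysis of the asymptotics (of exactly the type the paper's asymptotic-expansion method is built to provide) or a comparison-from-a-slightly-earlier-time argument combined with the fact that $\overline D_t - D_t$ cannot change sign without violating the differential inequality; the cleanest route is to show directly that any solution bounded on $[0,T^-]$ with $D_t\to\infty$ must lie below $\overline D$ by contradiction, examining the first (largest) time $t_0$ at which $D_{t_0}=\overline D_{t_0}$ and deriving $-\dot D \ge -\dot{\overline D}$ there, which is inconsistent with $D<\overline D$ just to the right of $t_0$. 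All remaining verifications are routine ODE calculus.
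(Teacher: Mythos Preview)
Your overall strategy—compare each component to an explicit scalar Riccati ODE using the sign information from Lemma~\ref{lemma-nonnegative}—matches the paper's, and your identification of the four comparison functions $\underline D,\overline D,\underline B,\overline E$ together with the ODEs they satisfy is correct (modulo the arithmetic slip $\eta^{-1}\lambda_t+\eta^{-1}\gamma\rho_t\le\kappa^2$, not $\tfrac12\kappa^2$). Where your proposal has a genuine gap is precisely at the point you yourself flag as the main obstacle: the terminal singularity of $D$.

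Your suggested fix—look at the largest time $t_0$ with $D_{t_0}=\overline D_{t_0}$ and compare derivatives there—is an ODE argument that does not transfer to the BSDE setting: $D$ is a semimartingale driven by $W$, has no pointwise time derivative, and the ``first crossing time'' would be a random time, so the differential-inequality contradiction does not make sense as stated. The paper's device is instead a \emph{time shift}: since the ODEs for $\overline D$ and $\underline D$ are autonomous, $\overline D^\delta_t:=\overline D_{t+\delta}$ still solves $-\dot{\overline D}^\delta=\kappa^2-|\overline D^\delta|\overline D^\delta$ but now blows up at $T-\delta$. Because $D\in L^\infty_{\mathcal F}(\Omega;C([0,T-\delta];\mathbb R))$ while $\overline D^\delta_t\to\infty$ as $t\uparrow T-\delta$, there is a deterministic $s<T-\delta$ with $D\le\overline D^\delta$ on $[s,T-\delta)$; the standard one-dimensional monotone-driver comparison theorem (\cite[Proposition~5.33]{PardouxRascanu14}) then applies on $[0,s]$ with finite terminal data, and letting $\delta\to0$ yields $D\le\overline D$ on $[0,T)$. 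The lower bound is handled symmetrically with $\underline D^\delta_t:=\underline D_{t-\delta}$, whose singularity is pushed past $T$. An analogous but simpler device is used for the upper bound on $E$: compare on $[0,T-\delta]$ against the solution of $-\dot y=\gamma^{-1}\kappa^2-\gamma y^2$ with terminal value $\|E_{T-\delta}\|_{L^\infty}$, then let $\delta\to0$ using $E_T=0$; this avoids the non-Lipschitz coefficient $D_t$ in the $E$-driver near $T$, which your direct comparison on $[0,T]$ glosses over. Two smaller points: the lower bound for $D$ is \emph{not} a linear BSDE inequality as you write in step~(ii)—it is the Riccati comparison you correctly identified earlier—and your linear-representation argument for $\underline B\le B$ only works after you know $E_tD_t$ is bounded (by $\overline E_t\overline D_t=\gamma^{-1}\kappa^2$), so the logical order must be $D$-bounds and $E$-upper first, $B$-lower last; the paper makes this ordering explicit.
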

\begin{proof}
	Since $D\geq 0$ we may write the BSDE for $D$ in monotone form. That is,
\begin{align*}
-dD_t&=\left\{\eta^{-1}\lambda_t-|D_t|D_t+\eta^{-1}\gamma\rho_t B_t-\gamma E_t|D_t|\right\}dt-Z_t^D\,dW_t.
\end{align*} 
The lower and upper estimate for $D$ solve
\[
-d\underline D_t=\{-|\underline D_t|\underline D_t-\eta^{-1}\gamma|\underline D_t|\}\,dt,
\]
and
\[
-d\overline D_t=\{\kappa^2 -|\overline D_t|\overline D_t \}\,dt,
\]
respectively. The preceding equations are time-homogeneous. Thus, for any $\delta>0$ the processes $\underline D_t^{\delta}:=\underline D_{t-\delta}$ and $\overline D_t^{\delta}:=\overline D_{t+\delta}$ still satisfy the respective equations but with singularities at $t=T+\delta$ and $t=T-\delta$, respectively. Since $D$ is essentially bounded on $[0,T-\delta]$ and $\lim_{t\rightarrow T-\delta} \overline D_t^{\delta}=\infty$ in $L^\infty$ there exits $s\in[0,T-\delta]$ such that $D\leq \overline D^{\delta}$ on $[s,T-\delta)$. Because $B\leq 1$ and $-ED\leq0$, we have for all $(t,y)\in[0,s]\times \mathbb R$,
\[
\eta^{-1}\lambda_t-|y|y+\eta^{-1}\gamma\rho_tB_t-\gamma E_tD_t\leq \kappa^2 -|y|y.
\] 
Hence, the classical one-dimensional comparison theorem for BSDEs with monotone drivers \cite[Proposition~5.33]{PardouxRascanu14} yields $D\leq \overline D^{\delta}$ on $[0,s]$. Finally, letting $\delta\rightarrow 0$ yields $D\leq \overline D$ on $[0,T)$ by the continuity of $\overline D$.

In order to establish $\underline D\leq D$ on $[0,T)$ one argues similarly. In this case the comparison argument is justified by the inequality  
\[
-|y|y-\eta^{-1}\gamma|y|\leq \lambda_t-|y|y+\eta^{-1}\gamma\rho_tB_t-\gamma E_t|y|.
\]

Next, we establish  the upper estimate for $E$. Since $E,D\geq 0$ we may again assume that the BSDE for $E$ is monotone, that is
\[
-dE_t=\{2\eta^{-1}\rho_t-2\rho_tE_t-\gamma |E_t|E_t-\eta^{-1}\rho_tB_t-E_tD_t\}\,dr-Z^E_t\,dW_t.
\]
Since $E,B,D\geq0$ we have for all $(t,y)\in[0,T)\times\mathbb R$ that
\begin{equation} \label{e-upper}
	2\eta^{-1}\rho_t-2\rho_t E_t-\gamma|y|y-\eta^{-1}\gamma\rho_tB_t-E_tD_t\leq \gamma^{-1}\kappa^2-\gamma |y|y.
\end{equation}
Let us consider for $\delta>0$ the deterministic process  
\[
	\overline E_t^{\delta}=\gamma^{-1}\kappa\tanh\left(\kappa(T-\delta-t)+\arctanh(\gamma \kappa^{-1}\|E_{T-\delta}\|_{L^\infty})\right), \qquad 0\leq t\leq T-\delta.
\]
Then,
\begin{equation*}
\left\{
\begin{aligned}
	-d\overline E_t^{\delta}& =\gamma^{-1}\kappa^2-\gamma|\overline E_t^{\delta}|\overline E_t^{\delta}, \qquad  0\leq t\leq T-\delta \\
	\overline E_{T-\delta}^\delta & =\|E_{T-\delta}\|_{L^\infty}.
\end{aligned}
\right.
\end{equation*}
Hence, recalling \eqref{e-upper}, the one-dimensional comparison theorem implies 
\[
	E_t\leq \overline E_t^{\delta}, \qquad t\in[0,T-\delta].
\]
Since $\|E_{T}\|_{L^\infty}=0$, letting $\delta\rightarrow 0$ completes the proof.

	Finally, to establish the lower estimate for $B$ one notices that $\underline B$ solves
\[
	-d\underline B_t=-\|\rho\|_{L^\infty}\underline B_t\,dt, \quad 0\leq t\leq T; \qquad \underline B_T=1,
\]
and is hence a subsolution to the BSDE for $B$. At this point, we already know that the potential singular term $E_tD_t$ in the BSDE for $B$ behaves well (being bounded by $\overline E_t\overline D_t=\gamma^{-1}\kappa^2$) on the entire interval $[0,T]$. Hence, no shifting argument at the terminal time is needed in this step and we conclude directly by comparison that $\underline B\leq B$. 
\end{proof}

	From the a priori estimates we obtain the asymptotic behavior of our BSDE system at the terminal time as stated in the following corollary. The asymptotic at the terminal time is key to our existence result. 
\begin{corollary} \label{cor-asymptotics}
	The following asymptotic behaviors hold in $L^\infty$ as $t\rightarrow T$:
\begin{equation*}
\begin{aligned}
	(T-t)A_t&=\eta+O(T-t),\\
B_t&=1+O(T-t),\\ 
	C_t&=O((T-t)^3).
\end{aligned}
\end{equation*}
\end{corollary}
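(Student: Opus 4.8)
The plan is to read all three asymptotics directly off the a priori estimates of Proposition~\ref{prop-a-priori-estimates} and the sign information of Lemma~\ref{lemma-nonnegative}; no further BSDE machinery is needed. Every bound used below is pathwise (indeed deterministic), so the $L^\infty$ statements follow at once by taking essential suprema.

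I would start with $B$. The two-sided bound $e^{-\|\rho\|_{L^\infty}(T-t)}\le B_t\le1$ together with the elementary inequality $0\le1-e^{-x}\le x$ gives $0\le1-B_t\le\|\rho\|_{L^\infty}(T-t)$, which is exactly $B_t=1+O(T-t)$. For $A$, I would use the identity $A_t=\eta D_t+\gamma B_t$ coming from $D=\eta^{-1}(A-\gamma B)$, so that $(T-t)A_t=\eta\,(T-t)D_t+\gamma\,(T-t)B_t$; the second summand is $O(T-t)$ by the bound on $B$. For the first summand I would Taylor expand the explicit bounds on $D$: writing $v:=\eta^{-1}\gamma(T-t)$ one has $(T-t)\underline D_t=\frac{v}{e^{v}-1}=1-\tfrac12 v+O(v^2)=1+O(T-t)$ (and $(T-t)\underline D_t\equiv1$ in the degenerate case $\gamma=0$), while writing $x:=\kappa(T-t)$ one has $(T-t)\overline D_t=x\coth x=1+\tfrac13 x^2+O(x^4)=1+O((T-t)^2)$. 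Squeezing between these two bounds gives $(T-t)D_t=1+O(T-t)$, hence $(T-t)A_t=\eta+O(T-t)$.

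For $C$ I would exploit the linear representation~\eqref{solution-formula-c}. Bounding the nonnegative kernel $e^{-\int_t^s2\rho_r\,dr}$ by $1$ yields $0\le-C_t\le\mathbb E\big[\int_t^T\eta E_s^2\,ds\,\big|\,\mathcal F_t\big]$, so the task reduces to a quadratic-in-$(T-s)$ bound on $E_s$. This I would obtain not from the explicit bound $E_s\le\overline E_s$ but from the algebraic relation $\eta E_s=\gamma C_s-B_s+1$: since $\gamma C_s\le0$ by Lemma~\ref{lemma-nonnegative} and $1-B_s\le\|\rho\|_{L^\infty}(T-s)$, we get $0\le\eta E_s\le\|\rho\|_{L^\infty}(T-s)$, hence $\eta E_s^2\le\eta^{-1}\|\rho\|_{L^\infty}^2(T-s)^2$. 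Integrating gives $|C_t|\le\tfrac13\eta^{-1}\|\rho\|_{L^\infty}^2(T-t)^3$, that is, $C_t=O((T-t)^3)$.

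The one spot that deserves attention — and the closest thing to an obstacle — is precisely this $C$ estimate. Feeding the explicit bound $E_s\le\gamma^{-1}\kappa\tanh(\kappa(T-s))\le\gamma^{-1}\kappa^2(T-s)$ into $\int_t^T\eta E_s^2\,ds$ also produces an $O((T-t)^3)$ bound, but with a constant proportional to $\gamma^{-2}$ that degenerates as $\gamma\downarrow0$; using instead the sign-based bound $\eta E_s\le1-B_s$ keeps the constant uniform and simultaneously covers $\gamma=0$. Apart from this, the argument is entirely routine Taylor expansion of $\frac{v}{e^v-1}$ and $x\coth x$ near $0$.
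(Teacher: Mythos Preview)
Your argument is correct and follows essentially the same route as the paper: read off $A$ and $B$ from the a priori two-sided bounds on $D$ and $B$ (using $A=\eta D+\gamma B$), and obtain $C$ from the linear representation~\eqref{solution-formula-c} together with a linear-in-$(T-s)$ bound on $E_s$. The only difference is that the paper bounds $E_s$ by $\overline E_s=\gamma^{-1}\kappa\tanh(\kappa(T-s))=O(T-s)$, whereas you use the algebraic relation $\eta E_s=\gamma C_s+(1-B_s)\le 1-B_s$ to get a $\gamma$-uniform bound; this is a nice refinement but not a structurally different argument.
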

\begin{proof} 
	The asymptotic behavior of $A=\eta(D+\gamma B)$ and $B$ follows directly from the a priori estimates given above. The asymptotic order of $C$ follows from~\eqref{solution-formula-c} and $E_t=O(\overline E_t)=O(T-t)$ in $L^\infty$ as $t\rightarrow T$.
\end{proof}

\section{Existence} \label{sec-existence}

	In this section we prove Theorem~\ref{thm-existence}, i.e. the existence of a solution to the BSDE syetem~\eqref{BSDE-infty} that satisfies the singular terminal condition \eqref{singular-terminal-condition}. Similarly as in~\cite{GraeweHorstSere13}, our proof of existence is based on the asymptotic behavior established in Corollary~\ref{cor-asymptotics}. It suggests the following asymptotic ansatz:
\begin{equation} \label{asymptotic-ansatz}
\begin{aligned}
	A_t&=\frac{\eta}{T-t}+\frac{H_t}{(T-t)^2}, && H_t=O((T-t)^2) \text{ in $L^\infty$ as $t\rightarrow T$},\\
  B_t&=1+\frac{G_t}{T-t}, & & G_t=O((T-t)^2) \text{ in $L^\infty$ as $t\rightarrow T$},\\
 	C_t&=P_t, && P_t=O((T-t)^2) \text{ in $L^\infty$ as $t\rightarrow T$},
\end{aligned}
\end{equation}
where the asymptotic order of $H$ and $G$ is raised artificially for similar reasons as in~\cite[Remark~4.2]{GraeweHorstSere13} to obtain the locally Lipschitz type statement given in Lemma~\ref{lemma-fixed-point}(ii) below, while the reduced order of $P$ unifies the notation and allows us to solve for all three processes in the same weighted $L^\infty$-space.   

	The asymptotic ansatz~\eqref{asymptotic-ansatz} reduces the original system \eqref{BSDE-infty} to 
\begin{equation} \label{BSDE_0} 
\!\left\{
\begin{aligned}
	\!-dH_t&= \left\{(T-t)^2\lambda_t-\frac{1}{\eta}\left(\frac{H_t}{T-t}-\gamma (T-t+G_t)\right)^2+2\gamma(T-t+G_t)\right\}\!dt+Z_t^HdW_t\\
	\!-dG_t& =\left\{\!-\rho_t(T\!-\!t+G_t)\!+\!\frac{1}{\eta}\!\left(\!\gamma P_t-\frac{G_t}{T\!-\!t}\right)\!\!\left(\frac{H_t}{T\!-\!t}\!-\!\gamma(T\!-\!t+G_t)\!\right)\!+\!\gamma P_t\right\}\!dt+Z_t^GdW_t\\
	\!-dP_t&= \left\{-2\rho_t P_t-\frac{1}{\eta}\left(\gamma P_t-\frac{G_t}{T-t}\right)^2\right\}\!dt+Z_t^PdW_t.
\end{aligned}
\right.
\end{equation}
We define $f:\Omega\times[0,T)\times\mathbb R^3\rightarrow \mathbb R^3$ such that we have 
\[
-dY_t=f(t,Y_t)\,dt-Z_t\,dW_t
\]
as a compact notation for~\eqref{BSDE_0} by identifying $Y=(H,G,P)$ and $Z=(Z^H,Z^G,Z^P)$. For $\delta>0$ specified below, we will establish the existence of a short-time solution to~\eqref{BSDE_0} in the space 
\[
	\mathcal H=\{Y\in L_{\mathcal F}^\infty(\Omega;C([T-\delta,T];\mathbb R^3)):\|Y\|_{\mathcal H}<+\infty\}
\] 
endowed with the norm
\[
\|Y\|_{\mathcal H}=\left\|(T-\cdot)^{-2}Y_\cdot\right\|_{L^\infty_{\mathcal F}(\Omega;C([T-\delta,T];\mathbb R^3))}.
\]
Since $Y_T = 0$ this means that we are looking for a fixed point in $\mathcal H$ of the operator
\[
	\Gamma(Y):=\bigg(\mathbb E\left[\left.\int_t^Tf(s,Y_s)\,ds\right|\mathcal F_t\right]\bigg)_{T-\delta\leq t\leq T}.
\]

\begin{lemma} \label{lemma-fixed-point}
	The following holds:
\begin{itemize}
	\item[(i)] $\mathcal H$ is complete.
	\item[(ii)] For every $R>0$ there exists a constant $L>0$ (independent of $\delta$) such that
	\[\| f(\cdot, Y_\cdot)-f(\cdot, X_\cdot)\|_{\mathcal H}\leq L\| Y_\cdot- X_\cdot\|_{\mathcal H}\quad \mbox{for all } Y, X\in\overline B_{\mathcal H}(R).\]
\end{itemize}
\end{lemma}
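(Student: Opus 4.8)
The plan is to verify the two assertions separately. For (i), completeness of $\mathcal H$: the map $Y \mapsto (T-\cdot)^{-2}Y_\cdot$ is a linear isometry from $\mathcal H$ onto a \emph{closed} subspace of $L^\infty_{\mathcal F}(\Omega;C([T-\delta,T];\mathbb R^3))$, namely the subspace of processes whose paths extend continuously to $T$ (this is exactly the condition $Y_T=0$, i.e.\ $(T-t)^{-2}Y_t$ stays bounded as $t\to T$, which is preserved under $\|\cdot\|_{\mathcal H}$-limits). Since $L^\infty_{\mathcal F}(\Omega;C([T-\delta,T];\mathbb R^3))$ is a Banach space, so is $\mathcal H$; I would spell out only that a $\|\cdot\|_{\mathcal H}$-Cauchy sequence has a uniform limit whose rescaled version is continuous up to $T$, hence lies in $\mathcal H$.

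For (ii), the local Lipschitz estimate, I would examine the driver $f=(f^H,f^G,f^P)$ from \eqref{BSDE_0} term by term. The only delicate terms are those containing the factors $H_t/(T-t)$ and $G_t/(T-t)$; but for $Y\in\overline B_{\mathcal H}(R)$ we have $|H_t|\le R(T-t)^2$ and $|G_t|\le R(T-t)^2$, so $|H_t/(T-t)|\le R(T-t)$ and $|G_t/(T-t)|\le R(T-t)$ remain bounded (in fact vanish at $T$), and $|P_t|\le R(T-t)^2$. Thus on $\overline B_{\mathcal H}(R)$ all arguments appearing inside the quadratic nonlinearities lie in a fixed compact set, and $f$ is a polynomial in $(H/(T-t),\,G/(T-t),\,P,\,T-t)$ with coefficients bounded by $\|\lambda\|_{L^\infty},\|\rho\|_{L^\infty},\gamma,\eta^{-1}$. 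For any two points $Y,X\in\overline B_{\mathcal H}(R)$ I would bound $|f^i(t,Y_t)-f^i(t,X_t)|$ by a constant (depending on $R$, $\gamma$, $\eta$, the $L^\infty$-norms of $\lambda,\rho$, and $\delta\le T$, but \emph{not} on $\delta$ once we take $\delta\le T$ small enough, or simply absorb $T$ into $L$) times
\[
\bigl|\tfrac{H_t-\tilde H_t}{T-t}\bigr| + \bigl|\tfrac{G_t-\tilde G_t}{T-t}\bigr| + |P_t-\tilde P_t|,
\]
where $\tilde Y=X$. Crucially I must then check that after multiplying by $(T-t)^{-2}$ and taking the sup, each term is controlled by $\|Y-X\|_{\mathcal H}$: e.g.\ $(T-t)^{-2}\cdot(T-t)^{-1}|H_t-\tilde H_t| = (T-t)^{-3}|H_t-\tilde H_t|$, which is \emph{not} obviously bounded by $\|Y-X\|_{\mathcal H}$. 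The resolution is that the difference $|f^i(t,Y_t)-f^i(t,X_t)|$ actually carries an extra factor $(T-t)$: in the products like $(H_t/(T-t))^2$, the difference is $(H_t/(T-t)+\tilde H_t/(T-t))\cdot(H_t-\tilde H_t)/(T-t)$, and the first factor is $O(T-t)$ on $\overline B_{\mathcal H}(R)$, giving an overall bound of order $(T-t)\cdot(T-t)^{-1}|H_t-\tilde H_t| = |H_t-\tilde H_t|$; after dividing by $(T-t)^2$ this is exactly $\le \|Y-X\|_{\mathcal H}$. This is precisely why the asymptotic orders of $H$ and $G$ were raised artificially in \eqref{asymptotic-ansatz}.

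The main obstacle is therefore the bookkeeping of powers of $(T-t)$: one must track, for each monomial in the driver, that the difference of its values at $Y$ and $X$ gains at least a factor $(T-t)^2$ relative to the naive estimate, so that it is dominated by $(T-t)^2\|Y-X\|_{\mathcal H}$ and the $\|\cdot\|_{\mathcal H}$-estimate closes with a constant $L$ independent of $\delta$. I would handle the three drivers by listing their monomials, noting for each the order in $(T-t)$ of both factors in the telescoped difference, and concluding that every term contributes $O((T-t)^2)\|Y-X\|_{\mathcal H}$; the linear terms $2\gamma(T-t+G_t)$, $-\rho_t(T-t+G_t)$, $\gamma P_t$, $-2\rho_t P_t$ are immediate since their differences involve only $G_t-\tilde G_t$ or $P_t-\tilde P_t$, each already $O((T-t)^2)\|Y-X\|_{\mathcal H}$.
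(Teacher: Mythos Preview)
Your proposal is correct and essentially matches the paper's proof. For (i) the paper argues via the same isometry, phrased in the inverse direction as $Z\mapsto (T-\cdot)^2 Z$ from $L^\infty_{\mathcal F}(\Omega;C([T-\delta,T];\mathbb R^3))$ into $\mathcal H$; for (ii) the paper carries out your power-counting in one stroke via the mean value theorem, bounding $|f(t,Y_t)-f(t,X_t)|$ by $\sup_{|y|\le (T-t)^2R}\|\partial_y f(t,y)\|\cdot|Y_t-X_t|\le (T-t)^2 L\,\|Y-X\|_{\mathcal H}$ after computing the Jacobian $\partial_y f$ explicitly and observing that each singular entry (e.g.\ $-2y_1/(\eta(T-t)^2)$) is uniformly bounded on $|y|\le(T-t)^2R$---exactly the cancellation you track monomial by monomial, but organized so that the worrisome $(T-t)^{-3}$ factor never appears.
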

\begin{proof}
	The spaces $L_{\mathcal F}^\infty(\Omega;C([T-\delta,T];\mathbb R))$ and $\mathcal H$ are isometrically isomorphic by identifying $Y\in L_{\mathcal F}^\infty(\Omega;C([T-\delta,T];\mathbb R))$ with the process $((T-t)^2Y_t)_{T-\delta\leq t\leq T}$ in $\mathcal H$. Hence $\mathcal H$ is complete. 

	In order to establish the Lipschitz continuity, let $\overline{ Y_tY_t'}$ be the line segment connecting $Y_t$ and~$Y_t'$. By the mean value theorem we have for $Y,Y'\in \overline B_{\mathcal H}(R)$, $d\mathbb P\times dt$-a.e.,
\begin{align}
	|f(t,Y_t)-f(t,Y_t')|&\leq \sup_{y\in \overline{Y_tY_t'}}\left\|\partial_y f(t,y)\right\|_{\textup{Hom}(\mathbb R^3;\mathbb R^3)}|Y_t-Y_t'| \nonumber\\
	&\leq (T-t)^2\sup_{|y|\leq (T-t)^2 R}\left\|\partial_y f(t,y)\right\|_{\textup{Hom}(\mathbb R^3;\mathbb R^3)}\|Y-Y\|_{\mathcal H}, \label{time-weighted-Lip} 
\end{align}
where it is used that the line $\overline{ Y_tY_t'}$ is contained in $\overline B_{\mathbb R^3}((T-t)^2R)$, $d\mathbb P\times dt$-a.e. But,
\[
\partial_y f(t,y)=
\begin{pmatrix} 
	\frac{-2y_1}{\eta(T-t)^2}+\frac{2\gamma y_2}{\eta (T-t)}+\frac{2\gamma}{\eta} & \frac{2\gamma y_1}{\eta(T-t)}-\frac{2(y_2+T-t-\eta\gamma^{-1})}{\eta\gamma^{-2}} & 0 \\ 
	\frac{-y_2}{\eta (T-t)^2}+\frac{\gamma y_3}{\eta(T-t)} & \frac{-y_1}{\eta(T-t)^2}+\frac{2\gamma y_2}{\eta(T-t)}-\frac{\gamma y_3-1}{\eta\gamma^{-1}}-\rho_t & \frac{\gamma y_1}{\eta(T-t)}-\frac{y_2+T-t-1}{\eta\gamma^{-2}}\\
	0 & \frac{-2y_2}{\eta(T-t)^2} +\frac{2\gamma y_3}{\eta(T-t)} & \frac{2\gamma y_2}{\eta(T-t)}-\frac{2 y_3}{\eta\gamma^{-2}}-2\rho_t
\end{pmatrix},
\]
from which we see that the supremum in~\eqref{time-weighted-Lip} is essentially bounded on $\Omega\times[T-\delta,T]$.
%
\end{proof}

Choosing $R$ and $\delta$ appropriately the preceding lemma allows us to use a standard fix-point argument to show that $\Gamma$ has a unique fix-point. The fix-point is just a local solution to \eqref{BSDE_0}.

\begin{proposition} \label{prop-local-existence}
	For $\delta>0$ sufficient small there exists a short-time solution $(Y,Z)\in \mathcal H\times L^2_{\mathcal F}(T-\delta,T;\mathbb R^{3\times m})$ to~\eqref{BSDE_0}.
\end{proposition}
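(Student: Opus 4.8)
The plan is to establish a short-time solution to the reduced system \eqref{BSDE_0} by a contraction mapping argument for the operator $\Gamma$ on a suitable closed ball $\overline B_{\mathcal H}(R)$, exploiting the estimates provided by Lemma~\ref{lemma-fixed-point}. First I would show that $\Gamma$ maps $\overline B_{\mathcal H}(R)$ into itself for an appropriate choice of $R$ and sufficiently small $\delta$. To this end, note that $f(t,0)$ is essentially bounded on $\Omega\times[T-\delta,T]$ (this can be read off directly from \eqref{BSDE_0}: at $Y=0$ the potentially singular quotients $H_t/(T-t)$ and $G_t/(T-t)$ vanish, leaving only bounded terms in $\lambda_t$, $\rho_t$, $\gamma$ and $T-t$), so $\|f(\cdot,0)\|_{\mathcal H}\leq M_0$ for some constant $M_0$ independent of $\delta$. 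Combining this with the local Lipschitz estimate from Lemma~\ref{lemma-fixed-point}(ii), for $Y\in\overline B_{\mathcal H}(R)$ we get $\|f(\cdot,Y_\cdot)\|_{\mathcal H}\leq M_0+LR$. Then, using that conditional expectation is an $L^\infty$-contraction and that $\int_t^T(T-s)^2\,ds=(T-t)^3/3$, one bounds
\[
\|\Gamma(Y)\|_{\mathcal H}=\Big\|(T-\cdot)^{-2}\mathbb E\Big[\int_\cdot^T f(s,Y_s)\,ds\,\Big|\,\mathcal F_\cdot\Big]\Big\|_{L^\infty}\leq \tfrac{\delta}{3}\,(M_0+LR),
\]
where I also use the elementary bound $(T-t)^{-2}\int_t^T(T-s)^2\,ds\leq \delta/3$ on $[T-\delta,T]$. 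Choosing $R:=2M_0$ and then $\delta$ small enough that $\tfrac{\delta}{3}(M_0+LR)\leq R$ (and likewise for the contraction constant below) yields $\Gamma(\overline B_{\mathcal H}(R))\subseteq\overline B_{\mathcal H}(R)$.

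Next I would verify that $\Gamma$ is a strict contraction on $\overline B_{\mathcal H}(R)$. For $Y,X\in\overline B_{\mathcal H}(R)$, the same chain of estimates together with Lemma~\ref{lemma-fixed-point}(ii) gives
\[
\|\Gamma(Y)-\Gamma(X)\|_{\mathcal H}\leq \tfrac{\delta}{3}\,\|f(\cdot,Y_\cdot)-f(\cdot,X_\cdot)\|_{\mathcal H}\leq \tfrac{\delta L}{3}\,\|Y-X\|_{\mathcal H}.
\]
Shrinking $\delta$ further so that $\tfrac{\delta L}{3}<1$, the operator $\Gamma$ is a contraction on the complete metric space $\overline B_{\mathcal H}(R)$ (completeness being Lemma~\ref{lemma-fixed-point}(i)), and Banach's fixed point theorem furnishes a unique $Y=(H,G,P)\in\overline B_{\mathcal H}(R)$ with $\Gamma(Y)=Y$.

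It remains to recover the control component $Z$ and to argue that $(Y,Z)$ genuinely solves the BSDE \eqref{BSDE_0} on $[T-\delta,T]$. The process $N_t:=\mathbb E[\int_{T-\delta}^T f(s,Y_s)\,ds\mid\mathcal F_t]$ is a square-integrable (indeed bounded) martingale on $[T-\delta,T]$, so by the martingale representation theorem it admits a representation $N_t=N_{T-\delta}+\int_{T-\delta}^t Z_s\,dW_s$ with $Z\in L^2_{\mathcal F}(T-\delta,T;\mathbb R^{3\times m})$. Since $Y_t=\Gamma(Y)_t=\mathbb E[\int_t^T f(s,Y_s)\,ds\mid\mathcal F_t]=N_t-\int_{T-\delta}^t f(s,Y_s)\,ds$, rearranging gives $Y_t=Y_T+\int_t^T f(s,Y_s)\,ds-\int_t^T Z_s\,dW_s$ with $Y_T=0$, which is exactly \eqref{BSDE_0} in integrated form; continuity of $t\mapsto Y_t$ holds because $Y\in\mathcal H$. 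Finally, $\int_{T-\delta}^T|f(s,Y_s)|\,ds<\infty$ $\mathbb P$-a.s.\ since $\|f(\cdot,Y_\cdot)\|_{\mathcal H}<\infty$ implies $|f(s,Y_s)|\leq C(T-s)^2$, so all integrals are well defined.

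The main obstacle is the choice of the weighted norm on $\mathcal H$ and the bookkeeping of the $(T-t)^2$ weights: one must check that the singular factors $H_t/(T-t)$ and $G_t/(T-t)$ appearing in the driver $f$ are controlled — for $Y\in\mathcal H$ one has $|H_t|,|G_t|\leq \|Y\|_{\mathcal H}(T-t)^2$, so these quotients are $O(T-t)$ — and that the three heterogeneous components $H$, $G$, $P$ (whose natural asymptotic orders differ) can all be accommodated in the single space $\mathcal H$ with its uniform $(T-t)^{-2}$ weight; this is precisely why the asymptotic orders of $H$ and $G$ were raised artificially in the ansatz \eqref{asymptotic-ansatz}. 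Once the weighted estimates of Lemma~\ref{lemma-fixed-point} are in hand, the fixed point argument itself is routine.
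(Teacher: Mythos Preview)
Your contraction estimate and the martingale-representation step are correct and match the paper. The gap is in the self-mapping step: your claim that $\|f(\cdot,0)\|_{\mathcal H}\leq M_0$ is false. Evaluating the driver of~\eqref{BSDE_0} at $Y=0$ gives
\[
f(t,0)=\Bigl((T-t)^2\lambda_t-\tfrac{\gamma^2}{\eta}(T-t)^2+2\gamma(T-t),\ -\rho_t(T-t),\ 0\Bigr),
\]
so $(T-t)^{-2}f(t,0)$ carries the terms $2\gamma/(T-t)$ and $-\rho_t/(T-t)$, which blow up as $t\to T$. In other words $f(\cdot,0)\notin\mathcal H$, and more generally $f(\cdot,Y_\cdot)\notin\mathcal H$ for any $Y\in\overline B_{\mathcal H}(R)$, because the offending linear-in-$(T-t)$ terms do not depend on $Y$. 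Hence neither the triangle inequality $\|f(\cdot,Y_\cdot)\|_{\mathcal H}\leq M_0+LR$ nor the ensuing bound $\|\Gamma(Y)\|_{\mathcal H}\leq\frac{\delta}{3}(M_0+LR)$ is available. Your parenthetical remark that $f(t,0)$ is ``essentially bounded'' is true but is a strictly weaker statement than membership in $\mathcal H$.

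The fix is exactly what the paper does: do not pass through $\|f(\cdot,0)\|_{\mathcal H}$, but bound $\Gamma(0)$ directly. Since $|f(s,0)|\leq c(T-s)$ for a deterministic constant $c$ depending only on $\|\lambda\|_{L^\infty}$, $\|\rho\|_{L^\infty}$, $\gamma$, $\eta$, $T$, integration yields $|\Gamma(0)_t|\leq\int_t^T|f(s,0)|\,ds\leq \tfrac{c}{2}(T-t)^2$, so $\|\Gamma(0)\|_{\mathcal H}\leq c/2$ with a bound independent of $\delta$. Then choose $R$ so that $\|\Gamma(0)\|_{\mathcal H}\leq R/2$, and combine with your (correct) contraction estimate to get $\|\Gamma(Y)\|_{\mathcal H}\leq\|\Gamma(Y)-\Gamma(0)\|_{\mathcal H}+\|\Gamma(0)\|_{\mathcal H}\leq \tfrac{R}{2}+\tfrac{R}{2}=R$ once $\delta L\leq 1/2$. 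Note the order: $R$ is fixed first (independently of $\delta$), then $L=L(R)$ is obtained from Lemma~\ref{lemma-fixed-point}(ii), and only then is $\delta$ chosen small.
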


\begin{proof}
Let us fix $R=4\max\{T\|\lambda\|_{L^\infty}+\gamma^2 T/\eta+2\gamma,\|\rho\|_{L^\infty}\}$ and choose $L>0$ as in Lemma~\ref{lemma-fixed-point}. For $Y,Y'\in\overline B_{\mathcal H}(R)$ it then holds $d\mathbb P\times dt$-a.e.,
\begin{align*}
	|\Gamma(Y)_t-\Gamma(Y')_t|&\leq \mathbb E\left[\left.\int_t^T|f(s,Y_s)-f(s,Y'_s)|\,ds\right|\mathcal F_t\right]\\
	&\leq L(T-t)^3\|Y-Y'\|_{\mathcal H}
\end{align*}
This yields, as long as $0<\delta\leq (2L)^{-1}$,
\[
\|\Gamma(Y)-\Gamma(Y')\|_{\mathcal H}\leq \frac{1}{2}\|Y-Y'\|_{\mathcal H}.
\]
Hence, $\Gamma$ is an $1/2$-contraction on $\overline B_{\mathcal H}(R)$. Furthermore, $\Gamma$ maps $\overline B_{\mathcal H}(R)$ onto itself. Indeed, for all $Y\in\overline B_{\mathcal H}(R)$ it holds $d\mathbb P\times dt$-a.e.,
\begin{align*}
	|\Gamma(Y)_t|&\leq |\Gamma(Y)_t-\Gamma(0)_t|+|\Gamma(0)_t|\\
	&\leq (T-t)^2\frac{R}{2}+\mathbb E\left[\left.\int_t^T|f(s,0)|\,ds\right|\mathcal F_t\right]\\
&\leq (T-t)^2\frac{R}{2}+\mathbb E\left[\left.\int_t^T2\max\{(T-s)^2\lambda_s+\frac{\gamma^2}{\eta}(T-s)^2+2\gamma(T-s),\rho_s(T-s)\}\,ds\right|\mathcal F_t\right]\\	
	&\leq (T-t)^2\frac{R}{2}+ 2(T-t)^2\max\{T\|\lambda\|_{L^\infty}+\frac{\gamma^2}{\eta} T+2\gamma,\|\rho\|_{L^\infty}\}=(T-t)^2R.
\end{align*}
As a result, $\Gamma$ has a unique fixed point $Y\in\overline B_{\mathcal H}(R)$. The process $Y$ satisfies
\[
	Y_t= -\int_{T-\delta}^tf(s,Y_s)\,ds+\mathbb E\left[\left.\int_{T-\delta}^Tf(s,Y_s)\,ds\right|\mathcal F_t\right].
\]
By the martingale representation theorem there exits a process $Z\in L^2_{\mathcal F}(T-\delta,T;\mathbb R^{3\times m})$ such that
\[
	Y_t= -\int_{T-\delta}^tf(s,Y_s)\,ds+\int_{T-\delta}^tZ_s\,dW_s.
\]
Hence, $(Y,Z)$ gives the desired short-time solution to~\eqref{BSDE_0}.
\end{proof}

We are now ready to prove Theorem~\ref{thm-existence}.

\begin{proof}[Proof of Theorem~\ref{thm-existence}]
	The short-time solution to~\eqref{BSDE_0} established by Proposition~\ref{prop-local-existence} gives in terms of the ansatz~\eqref{asymptotic-ansatz} a short-time solution 
\[
(A,B,C)\in	L^\infty_{\mathcal F}(\Omega;([T-\delta,T^-];\mathbb R^3))\times L^2_{\mathcal F}(T-\delta,T^-;\mathbb R^{3\times m})
\] 
to~\eqref{BSDE-infty} that satisfies the singular terminal condition~\eqref{singular-terminal-condition}. In order to see that this short-time solution extends to a global solution in $L^\infty_{\mathcal F}(\Omega;C([0,T^-];\mathbb R^3))\times L^2_{\mathcal F}(0,T^-;\mathbb R^{3\times m})$ notice first that the system~\eqref{BSDE-infty} satisfies the assumptions of the local $L^\infty$-existence results for BSDEs with locally Lipschitz drivers of Lemma~\ref{lemma-local-lip-l-infty-existence} given in the appendix. Hence, the system~\eqref{BSDE-infty} imposed with the essentially bounded terminal value $(A_{T-\delta},B_{T-\delta},C_{T-\delta})$ admits an essentially bounded local extension on $[T-\delta-\delta',T-\delta]$. 
	 
	 Due to the a priori estimates given in Proposition~\ref{prop-a-priori-estimates} we know that this local extension will stay (recalling $A=\eta(D+\gamma B)$) in the bounded region $[0,\eta(\overline D_{T-\delta}+\gamma)]\times[0,1]\times[-1/\gamma,0]$. When iterating this extension procedure we may therefore choose (cf.\ the proof of Lemma~\ref{lemma-local-lip-l-infty-existence}) step by step the same local Lipschitz constant $L>0$ for the system~\eqref{BSDE-infty}, which results in a constant length $\delta'>0$ of the extension interval. Thus, after finitely many steps we obtain a global extension on $[0,T)$.
\end{proof}

\section{Verification} \label{sec-verification}

	This section devoted to the verification statement of Theorem~\ref{thm-verification}. Throughout, let 
\[
	((A,B,C),(Z^A,Z^B,Z^C))\in L_{\mathcal F}^\infty(0,T^-;\mathbb R^3)\times L_{\mathcal F}^2(0,T^-;\mathbb R^{3\times m})
\]
denote any solution to~\eqref{BSDE-infty} that satisfies~\eqref{singular-terminal-condition} and recall that the candidate optimal strategy~$\xi^*$  is given in terms of the processes
\[
	D:=\eta^{-1}(A-\gamma B) \qquad \text{and} \qquad E:=\eta^{-1}(\gamma C-B+1)
\]
for which a priori estimates have been established in Section~\ref{sec-a-priori}. The proof of the admissibility of $\xi^*$ uses the following iterated integral version of Gronwall's inequality.

\begin{lemma}[{\cite[Corollary~11.1]{BainovSimeonov92}}]  \label{lemma-iterated-gronwall}
	Let $u(t)$, $a(t)$, and $b(t)$ be nonnegative continuous functions on $[0,T]$ with $a(t)$ and $b(t)$ being nondecreasing, and suppose
	\[u(t)\leq a(t)+b(t)\int_0^t\int_0^sk(s,r)u(r)\,dr\,ds, \qquad 0\leq t\leq T,\]
	where $k(s,r)$ is a nonnegative continuous function on $\{0\leq r\leq s\leq T\}$. Then 
	\[u(t)\leq a(t)\exp\left(b(t)\int_0^t\int_0^s k(s,r)\,dr\,ds\right), \qquad 0\leq t\leq T.\]
\end{lemma}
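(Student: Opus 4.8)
The plan is to prove this classical inequality by \emph{freezing the upper endpoint}, which reduces the double-integral inequality to a scalar linear differential inequality that can be integrated directly. First I would fix an arbitrary $\tau\in[0,T]$ and work on $t\in[0,\tau]$. Since $a$ and $b$ are nondecreasing, the hypothesis gives for such $t$
\[u(t)\le a(\tau)+b(\tau)\int_0^t\int_0^s k(s,r)u(r)\,dr\,ds,\]
so the variable coefficients $a(t),b(t)$ may be replaced by the constants $a(\tau),b(\tau)$. To avoid a later division by zero, I fix $\varepsilon>0$ and set
\[v(t):=a(\tau)+\varepsilon+b(\tau)\int_0^t\int_0^s k(s,r)u(r)\,dr\,ds,\qquad t\in[0,\tau].\]
Then $u(t)\le v(t)$, $v(0)=a(\tau)+\varepsilon>0$, and because $s\mapsto\int_0^s k(s,r)u(r)\,dr$ is continuous (continuity of $k$ on $\{0\le r\le s\le T\}$ and of $u$), $v$ is continuously differentiable with $v'(t)=b(\tau)\int_0^t k(t,r)u(r)\,dr\ge 0$; in particular $v$ is nondecreasing and strictly positive on $[0,\tau]$.

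Next I would derive the differential inequality. Using $u(r)\le v(r)\le v(t)$ for $r\le t$ (monotonicity of $v$) and nonnegativity of $k$,
\[v'(t)=b(\tau)\int_0^t k(t,r)u(r)\,dr\le v(t)\,\phi(t),\qquad \phi(t):=b(\tau)\int_0^t k(t,r)\,dr,\]
where $\phi$ is continuous and nonnegative. Hence $(\log v)'(t)=v'(t)/v(t)\le\phi(t)$ on $[0,\tau]$, and integrating from $0$ to $\tau$ yields
\[v(\tau)\le(a(\tau)+\varepsilon)\exp\!\left(\int_0^\tau\phi(s)\,ds\right)=(a(\tau)+\varepsilon)\exp\!\left(b(\tau)\int_0^\tau\int_0^s k(s,r)\,dr\,ds\right).\]
Since $u(\tau)\le v(\tau)$, letting $\varepsilon\downarrow 0$ and recalling that $\tau\in[0,T]$ was arbitrary gives the claimed bound.

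I do not expect a genuine obstacle, as the statement is standard (it is quoted from Bainov--Simeonov); the only points needing care are: (i) freezing $\tau$ \emph{before} using the monotonicity of $a,b$, since the ordinary Gronwall argument requires the additive and multiplicative terms to be constants; (ii) the $\varepsilon$-regularization, which disposes of the degenerate case $a(\tau)=0$, in which iterating the hypothesis already forces $u\equiv 0$ on $[0,\tau]$; and (iii) the $C^1$-regularity of $v$, which rests only on the assumed continuity of $k$ and $u$. Alternatively one could apply Fubini to rewrite the double integral as $\int_0^t\big(\int_r^t k(s,r)\,ds\big)u(r)\,dr$ and invoke the ordinary Gronwall lemma, but the direct differentiation above seems cleanest.
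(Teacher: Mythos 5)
Your argument is correct. Note that the paper itself gives no proof of this lemma --- it is quoted verbatim from Bainov--Simeonov \cite[Corollary~11.1]{BainovSimeonov92} --- so there is no internal proof to compare against; your write-up supplies a self-contained elementary substitute. The structure is the standard one for Gronwall-type bounds with nondecreasing coefficients: freeze $\tau$, majorize $a(t),b(t)$ by $a(\tau),b(\tau)$ (legitimate here because $k,u\ge 0$ makes the double integral nonnegative), regularize by $\varepsilon$ to keep the comparison function strictly positive, and integrate the logarithmic derivative of $v(t)=a(\tau)+\varepsilon+b(\tau)\int_0^t\int_0^s k(s,r)u(r)\,dr\,ds$, using $u(r)\le v(r)\le v(t)$ and the continuity of $k$ on the compact triangle to justify $v\in C^1$ and the bound $v'\le \phi v$. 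Each of these steps checks out, and the final passage $\varepsilon\downarrow 0$, $\tau$ arbitrary, gives exactly the stated inequality. One small caveat on your closing remark: after the Fubini rewrite $\int_0^t\bigl(\int_r^t k(s,r)\,ds\bigr)u(r)\,dr$ the kernel still depends on the outer variable $t$, so the \emph{ordinary} Gronwall lemma does not apply verbatim; one needs the variant allowing a kernel nondecreasing in $t$ (which holds here since $k\ge 0$). Your direct differentiation route avoids that issue and is indeed the cleaner path.
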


We are now ready to verify that the candidate optimal control $\xi^*$ is indeed admissible. 

\begin{lemma} \label{lemma-admissible}
	The feedback control $\xi^*$ given in \eqref{feedback-form} is admissible.
\end{lemma}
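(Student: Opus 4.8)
**Proof plan for Lemma (admissibility of $\xi^*$).**

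The plan is to show that the feedback control $\xi^*_t := \xi^*_t(X^*_t, Y^*_t)$ generated by plugging $D$ and $E$ into \eqref{feedback-form} produces a well-defined closed-loop state process $(X^*, Y^*)$ on $[0,T)$, that $\xi^* \in L^2_{\mathcal F}(0,T;\mathbb R)$, and crucially that $X^*_T = 0$ a.s. First I would write the closed-loop dynamics: substituting $\xi^*_t = D_t X^*_t - E_t Y^*_t$ into the state equations gives the linear system
\[
\begin{aligned}
dX^*_t &= \{-D_t X^*_t + E_t Y^*_t\}\,dt, \qquad X^*_0 = x,\\
dY^*_t &= \{\gamma D_t X^*_t - (\rho_t + \gamma E_t) Y^*_t\}\,dt, \qquad Y^*_0 = y.
\end{aligned}
\]
The coefficient $D_t$ blows up as $t\to T$, so this is a linear ODE (pathwise, for each $\omega$) with a singular but integrable-in-a-controlled-way drift. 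The a priori estimates from Proposition~\ref{prop-a-priori-estimates} are exactly what make this tractable: $\underline D_t \le D_t \le \overline D_t$ with $\overline D_t = \kappa\coth(\kappa(T-t)) = O(1/(T-t))$, and $0 \le E_t \le \overline E_t = O(T-t)$, $\underline B_t \le B_t \le 1$, and $\overline E_t \overline D_t = \gamma^{-1}\kappa^2$ is bounded on all of $[0,T]$.

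The key step is to extract the precise decay of $X^*_t$ as $t \to T$. The dominant balance in the $X^*$-equation near $T$ is $dX^*_t \approx -D_t X^*_t\,dt$ with $D_t \sim \eta/(T-t)$ (recall $A_t = \eta(D_t + \gamma B_t)$ and Corollary~\ref{cor-asymptotics} give $D_t(T-t) \to 1$). Formally $X^*_t \sim X^*_s \exp(-\int_s^t D_r\,dr)$ and $\int_s^t D_r\,dr \sim \log\frac{T-s}{T-t} \to \infty$, so $X^*_t \to 0$ at polynomial rate $(T-t)$ (up to the lower/upper $D$-bounds). To make this rigorous I would not invert the singular ODE directly; instead I would solve the $2\times 2$ linear system by variation of constants, using $\exp(-\int_s^t D_r\,dr)$ as integrating factor for $X^*$, and then close the resulting coupled integral inequalities for $|X^*_t|$ and $|Y^*_t|$ using the iterated Gronwall inequality (Lemma~\ref{lemma-iterated-gronwall}). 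Concretely: bound $|Y^*_t|$ in terms of $\int_0^t D_r|X^*_r|\,dr$ and $|X^*_t|$ in terms of $\int_0^t E_r|Y^*_r|\,dr$, substitute one into the other to get $|X^*_t| \le a(t) + b(t)\int_0^t\int_0^s k(s,r)|X^*_r|\,dr\,ds$ with $k$ built from the products $D_sE_r$ (bounded because $E$ decays and one can absorb the $D$-singularity against the integrating-factor gain or against $E$), and conclude $|X^*_t| \le a(t)\exp(\cdots)$ with $a(t) = O(T-t) \to 0$. This gives $X^*_T = 0$. The $L^2$-integrability of $\xi^*$ then follows: $|\xi^*_t| \le D_t|X^*_t| + E_t|Y^*_t|$, where $D_t|X^*_t|$ stays bounded because $|X^*_t| = O(T-t)$ while $D_t = O(1/(T-t))$, and $E_t|Y^*_t|$ is bounded since $E$ and $Y^*$ are bounded; hence $\xi^*$ is in fact essentially bounded, a fortiori in $L^2_{\mathcal F}(0,T;\mathbb R)$.

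The main obstacle I anticipate is the careful handling of the singular coefficient $D_t$ when setting up the Gronwall argument: one must choose the decomposition of the coupled system so that every term entering the iterated kernel $k(s,r)$ and the forcing $a(t)$ is genuinely integrable up to $T$, exploiting that the singularity of $D$ is always paired either with the exponential integrating-factor gain or with the vanishing factor $E$ (recall $\overline D_t\overline E_t$ is bounded on $[0,T]$). A secondary technical point is that all estimates are pathwise in $\omega$ but the constants involved ($\kappa$, $\|\rho\|_{L^\infty}$, etc.) are deterministic, so measurability and the $L^2$ (indeed $L^\infty$) membership of $\xi^*$ follow uniformly; one should also note that $X^*,Y^*$ have continuous paths on $[0,T]$ after extending $X^*_T := 0$, which is needed to legitimately apply Lemma~\ref{lemma-iterated-gronwall} on the closed interval $[0,T]$.
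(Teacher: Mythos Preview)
Your strategy is the same as the paper's: write the closed-loop linear system, apply variation of constants to both components, substitute one representation into the other, and close with the iterated Gronwall inequality (Lemma~\ref{lemma-iterated-gronwall}) using the a priori bounds on $D$ and $E$; then $\xi^*\in L^\infty$ follows from $|X^*_s|=O(T-s)$ against $D_s=O((T-s)^{-1})$.

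There is, however, one genuine slip in the plan. You propose to apply Lemma~\ref{lemma-iterated-gronwall} directly to $|X^*_t|$ and read off $|X^*_t|\le a(t)\exp(\cdots)$ with $a(t)=O(T-t)\to 0$, but that lemma requires the forcing $a(\cdot)$ to be \emph{nondecreasing}, which a term of order $T-t$ is not. The paper's remedy is to apply the Gronwall argument to $\tilde X_s:=X^*_s\,e^{\int_t^s D_r\,dr}$ instead. For $\tilde X$ the forcing becomes $a(s)=|x|+|y|\int_t^s e^{\int_t^r D_u\,du}E_r\,dr$, which \emph{is} nondecreasing, and both $a(s)$ and the Gronwall exponential are shown to be uniformly bounded on $[t,T)$ via the identity $\int_t^r D_u\,e^{\int_u^r D_v\,dv}\,du=e^{\int_t^r D_u\,du}-1$ together with the a priori product bound $\gamma\,\overline E_r\,e^{\int_t^r\overline D_u\,du}\le\kappa\sinh(\kappa(T-t))$. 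The conclusion is only that $\tilde X$ is bounded; the decay $X^*_s\to 0$ (at rate $O(T-s)$) is obtained \emph{afterwards} from $|X^*_s|=|\tilde X_s|\,e^{-\int_t^s D_r\,dr}$ and the \emph{lower} bound $D\ge\underline D$. This two-step split---Gronwall for boundedness of $\tilde X$, then the lower bound on $D$ for decay of $X^*$---is precisely the piece your outline collapses into one step, and it is where the monotonicity hypothesis of Lemma~\ref{lemma-iterated-gronwall} would otherwise fail.
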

\begin{proof}
	Let us fix an initial state $(t,x,y)\in[0,T)\times\mathbb R\times \mathbb R$. The dynamics of the state process $(X^*,Y^*)$ under the candidate optimal control $\xi^*$ is given by:
\begin{equation} \label{optimal-state-dynamics}
\left\{
\begin{aligned}
dX_s^*&=\{-D_sX_s^*+E_sY_s^*\}\,ds\\
dY_s^*&=\{-(\rho_s+\gamma E_s)Y_s^*+\gamma D_sX_s^*\}\,ds.
\end{aligned}
\right.
\end{equation}
Due to the singularity of $D$ at the terminal time, it is not clear yet that the solution to~\eqref{optimal-state-dynamics} is well-defined at the terminal time;  a priori we only know that $(X^*,Y^*)\in L^\infty_{\mathcal F}(\Omega;C([t,T^-];\mathbb R^2)$. 

In order to show that $X^*_T=0$ we first apply the variation of constants formula for $t\leq s<T$ to get:
\begin{align*}
X_s^*=xe^{-\int_t^sD_u\,du}+\int_t^se^{-\int_r^s D_u\,du}E_rY_r^*\,dr
\end{align*}
and
\begin{equation} \label{voc-rep-y-optimal}
		Y_s^* =ye^{-\int_t^s(\rho_u+\gamma E_u)\,du}+\int_t^se^{-\int_r^s(\rho_u+\gamma E_u)\,du}\gamma D_rX_r^*\,dr.
\end{equation}
Hence, the process $\tilde X_s:=X_s^*e^{\int_t^sD_r\,dr}$ satisfies,
\begin{align*}
	\tilde X_s&= x+\int_t^se^{\int_t^r D_u\,du}E_rY_r^*\,dr\\
	&=x+\int_t^se^{\int_t^r D_u\,du}E_r\left(ye^{-\int_t^r(\rho_u+\gamma E_u)\,du}+\int_t^re^{-\int_u^r(\rho_v+\gamma E_v)\,dv}\gamma D_u e^{-\int_t^uD_v\,dv}\tilde X_u\,du\right)dr.
\end{align*}
Since $\rho,E\geq 0$, this yields,
\begin{align*}
	|\tilde X_s|&\leq|x|+\int_t^se^{\int_t^r D_u\,du}E_r|y|\,dr+\int_t^se^{\int_t^r D_u\,du}E_r\int_t^r\gamma D_u e^{-\int_t^uD_v\,dv}|\tilde X_u|\,du\,dr\\
	&=|x|+|y|\int_t^se^{\int_t^r D_u\,du}E_r\,dr+\int_t^s\gamma E_r\int_t^rD_u e^{\int_u^rD_v\,dv}|\tilde X_u|\,du\,dr.
\end{align*}
By the iterated integral version of Gronwall's inequality (Lemma~\ref{lemma-iterated-gronwall}),
\begin{equation} \label{gronwall-inequality}
\begin{aligned}
	|\tilde X_s|&\leq \left(|x|+|y|\int_t^se^{\int_t^r D_u\,du}E_r\,dr\right)\exp\left(\int_t^s\gamma E_r\int_t^rD_u e^{\int_u^rD_v\,dv}\,du\,dr\right) \\
	&=\left(|x|+|y|\int_t^se^{\int_t^r D_u\,du}E_r\,dr\right)\exp\left(\int_t^s\gamma E_r\left(e^{\int_t^r D_u\,du}-1\right)dr\right).
\end{aligned}
\end{equation}
In view of the a priori upper bounds on $D$ and $E$, because the antiderivative of $\coth(\cdot)$ is given by $\ln(\sinh(\cdot))$ and because $\cosh(\cdot)\geq 1$,
\begin{align*}
	\int_t^s\gamma E_re^{\int_t^r D_u\,du}\,dr &\leq \int_t^s\kappa\tanh(\kappa(T-r))e^{\int_t^r \kappa \coth(\kappa(T-u))\,du}\,dr\\
	&=\int_t^s\kappa \tanh(\kappa(T-r))\frac{\sinh(\kappa(T-t))}{\sinh(\kappa(T-r))}\,dr\\
	&\leq \kappa(s-t)\sinh(\kappa(T-t))\\
	&\leq \kappa T\sinh(\kappa T).
\end{align*}
Along with \eqref{gronwall-inequality} this shows that $|\tilde X_s|$ is bounded as $s\rightarrow T$. Therefore, this time using the a priori lower bound for $D$, 
\begin{equation} \label{optimal-x-state-bound}
\begin{aligned}
	|X_s^*|&= |\tilde X_s|\exp\left(-\int_t^sD_r\,dr\right)\\
	&\leq |\tilde X_s|\exp\left(-\int_t^s \frac{\eta^{-1}\gamma}{e^{\eta^{-1}\gamma(T-r)}-1}\,dr\right)\\
	&=|\tilde X_s|\frac{1-e^{-\eta^{-1}\gamma(T-s)}}{1-e^{-\eta^{-1}\gamma(T-t)}}\xrightarrow{s\rightarrow T} 0.
\end{aligned}
\end{equation}
This shows that $X^*_T=0$. It also shows that $X^*_s=O(T-s)$ in $L^\infty$ as $s\rightarrow T$. As $D_s=O((T-s)^{-1})$ it follows that $$DX^*\in L^\infty_{\mathcal F}(\Omega;C([t,T];\mathbb R)).$$ The boundedness of $DX^*$ again implies by \eqref{voc-rep-y-optimal} that $Y^*\in L^\infty_{\mathcal F}(\Omega;C([t,T];\mathbb R))$. Hence, we conclude $$\xi^*\in L^\infty_{\mathcal F}(\Omega;C([t,T];\mathbb R)).$$
This proves that $\xi^*$ is indeed admissible.
\end{proof}


\begin{lemma} \label{lemma-vanishing}
	For every $\xi\in\mathcal A(t,x)$ it holds $ E[A_s X_s^2+B_sX_sY_s+C_sY_s^2|\mathcal F_t]\xrightarrow{s\rightarrow T}0.$
\end{lemma}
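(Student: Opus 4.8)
\emph{Proof proposal.} The plan is to bound the three summands $A_sX_s^2$, $B_sX_sY_s$ and $C_sY_s^2$ separately, combining the a priori estimates of Section~\ref{sec-a-priori} (which quantify the blow-up of $A$ and the decay of $C$ at the terminal time) with the liquidation constraint $X_T=0$ and the integrability $\xi\in L^2_{\mathcal F}(t,T;\mathbb R)$ (which together quantify the decay of $X$). First I would note that $X_T=0$ and $dX_s=-\xi_s\,ds$ give $X_s=\int_s^T\xi_r\,dr$ on $[t,T]$, whence by Cauchy--Schwarz
\[
	0\le X_s^2\le(T-s)\int_s^T\xi_r^2\,dr\le(T-s)\int_t^T\xi_r^2\,dr,\qquad s\in[t,T].
\]
Since $\mathbb E[\int_t^T\xi_r^2\,dr\mid\mathcal F_t]<\infty$ a.s., the last bound already gives $\mathbb E[X_s^2\mid\mathcal F_t]\le(T-s)\,\mathbb E[\int_t^T\xi_r^2\,dr\mid\mathcal F_t]\to0$ as $s\to T$. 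Next, by Corollary~\ref{cor-asymptotics} and Lemma~\ref{lemma-nonnegative} there is a $T'<T$ with $0\le A_s\le 2\eta/(T-s)$ $\mathbb P$-a.s.\ for $s\in[T',T)$, so that $0\le A_sX_s^2\le 2\eta\int_s^T\xi_r^2\,dr$ there; since $\int_s^T\xi_r^2\,dr$ decreases to $0$ a.s.\ as $s\uparrow T$ and is dominated by $\int_t^T\xi_r^2\,dr$, the conditional dominated convergence theorem gives $\mathbb E[\int_s^T\xi_r^2\,dr\mid\mathcal F_t]\to0$ and hence $\mathbb E[A_sX_s^2\mid\mathcal F_t]\to0$.

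It remains to treat the two $Y$-dependent terms, for which I would exploit that $Y\in L^2_{\mathcal F}(\Omega;C([t,T];\mathbb R))$, so that $M:=\mathbb E[\sup_{r\in[t,T]}Y_r^2\mid\mathcal F_t]<\infty$ a.s. Since $0\le B_s\le1$ by Lemma~\ref{lemma-nonnegative}, the conditional Cauchy--Schwarz inequality gives
\[
	\mathbb E[\,|B_sX_sY_s|\mid\mathcal F_t]\le\bigl(\mathbb E[X_s^2\mid\mathcal F_t]\bigr)^{1/2}M^{1/2}\longrightarrow0
\]
by the first step, while Corollary~\ref{cor-asymptotics} gives $\|C_s\|_{L^\infty}=O((T-s)^3)\to0$, hence
\[
	\mathbb E[\,|C_s|\,Y_s^2\mid\mathcal F_t]\le\|C_s\|_{L^\infty}\,M\longrightarrow0.
\]
Summing the three bounds yields $\mathbb E[\,|A_sX_s^2|+|B_sX_sY_s|+|C_sY_s^2|\mid\mathcal F_t]\to0$, which in particular implies the assertion.

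The delicate point is the term $A_sX_s^2$: the blow-up rate $A_s=O((T-s)^{-1})$ from the a priori estimates and the decay of $X$, which for an arbitrary admissible strategy is available only in the averaged form $X_s^2\le(T-s)\int_s^T\xi_r^2\,dr$, compensate each other exactly, leaving the tail $\int_s^T\xi_r^2\,dr$ that vanishes by square-integrability of $\xi$. The remaining two contributions are soft, requiring only the uniform bound $0\le B\le1$, the $L^\infty$-decay of $C$, and the pathwise $L^2$-bound on $Y$.
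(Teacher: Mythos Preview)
Your proof is correct and follows essentially the same approach as the paper: the crucial term $A_sX_s^2$ is handled identically via $X_s^2\le(T-s)\int_s^T\xi_r^2\,dr$ combined with the asymptotic $(T-s)A_s=\eta+O(T-s)$ from Corollary~\ref{cor-asymptotics}. The only cosmetic difference is that the paper treats $B_sX_sY_s$ and $C_sY_s^2$ jointly by dominated convergence using $X_T=C_T=0$ and $B,C\in L^\infty_{\mathcal F}(\Omega;C([0,T];\mathbb R))$, whereas you split them and invoke conditional Cauchy--Schwarz for the $B$-term and the $L^\infty$-decay of $C$ for the $C$-term; both routes are equally valid and equally short.
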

\begin{proof} Recalling $B,C\in L_{\mathcal F}^\infty(\Omega;C([0,T];\mathbb R))$, $X,Y\in L^2_\mathcal F(\Omega;C([t,T];\mathbb R))$, and $X_T=C_T=0$, it follows by the dominated convergence theorem,
\[ 
	\mathbb E[B_sX_sY_s+C_sY_s^2|\mathcal F_t]\xrightarrow{s\rightarrow T}0.
\]
Furthermore, note that by $X_T=0$ and Jensen's inequality,
\[
	X_s^2=\left(\int_s^T\xi_r\,dr\right)^2\leq (T-s)\int_s^T\xi_r^2\,dr.
\]
Hence, by Corollary~\ref{cor-asymptotics},
\[
	\mathbb E[A_sX_s^2|\mathcal F_t]\leq \mathbb E\left[\left.(T-s)A_s\int_s^T\xi_r^2\,dr\right|\mathcal F_t\right]\xrightarrow{s\rightarrow T} 0. \qedhere
\]
\end{proof}

	We are now ready to prove the verification theorem.
	
\begin{proof}[Proof of Theorem~\ref{thm-verification}] 
	By a slight abuse of notation we \textit{define} within this proof the random fields $V_t(x,y)$ and $Z_t(x,y)$ by the linear-quadratic ansatz~\eqref{LQ-ansatz} and verify that this gives indeed the value function of the control problem. For the moment we only know that $(V,Z)$ is a classical solution the HJB equation~\eqref{HJB}.

	Let us fix an initial state $(t,x,y)\in [0,T)\times \mathbb R\times\mathbb R$ and admissible control $\xi\in\mathcal A(t,x)$. For $n\in\mathbb N$ we define the stopping time
\[
\tau_n:=\inf\{t\leq s\leq T:|X_s|\vee|Y_s|\geq n\}.
\]

Since $(V,Z)$ solve the HJB equation, it holds by the It\^o-Kunita formula~\cite[Theorem~I.8.1]{Kunita84} for all $t\leq s<T$, 
\begin{equation} \label{ito-kunita}
\begin{aligned}
	V_t(x,y) &=  V_{s\wedge\tau_n}(X_{s\wedge\tau_n},Y_{s\wedge\tau_n})+\int_t^{s\wedge\tau_n}\{\xi_r\partial_xV_r(X_r,Y_r)-(-\rho_rY_r+\gamma \xi_r)\partial_yV_r(X_r,Y_r)\}\,dr\\
	&\quad+\int_t^{s\wedge\tau_n}\inf_{\xi\in\mathbb R}\{-\xi\partial_x V_r(X_r,Y_r)-(\rho_t Y_r-\gamma\xi)\partial_y V_r(X_r,Y_r)+\tfrac{1}{2}\eta\xi^2+\xi y+\tfrac{1}{2}\lambda_t x^2\}\,dr\\
	&\quad-\int_t^{s\wedge\tau_n}Z_r(X_r,Y_r)\,dW_r.
\end{aligned}
\end{equation}
The above stochastic integral stopped at $\tau_n$ is a true martingale. Hence,
\begin{equation} \label{ito-kunita-in-expectation}
	V_t(x,y)\leq \mathbb E[ V_{s\wedge\tau_n}(X_{s\wedge\tau_n},Y_{s\wedge\tau_n})|\mathcal F_t]+\mathbb E\left[\left.\int_t^{s\wedge\tau_n}\{\tfrac{1}{2}\eta\xi_r^2+\xi_rY_r+\tfrac{1}{2}\lambda_rX_r^2\}\,dr\right|\mathcal F_t\right].
\end{equation}
Since the coefficients $A,B,C$ of the random field $V_r(x,y)$ are essentially bounded on $[t,s]$, since $X,Y\in L^2_{\mathcal F}(\Omega;C([t,T];\mathbb R))$, and because $\xi\in L^2_{\mathcal F}(t,T;\mathbb R)$ and $\lambda\in L^\infty_{\mathcal F}(0,T;\mathbb R_+)$, it follows by H\"older's inequality that 
\[
	AX^2,BXY,CY^2\in L^1_{\mathcal F}(\Omega;C([t,s];\mathbb R)) \quad \mbox{and} \quad 
	\xi^2,\xi Y,\lambda X^2\in L^1_{\mathcal F}(t,T;\mathbb R). 
\]	
Hence, the dominated convergence theorem applies when letting $n\rightarrow \infty$ in~\eqref{ito-kunita-in-expectation}, which yields,
\begin{equation}
	V_t(x,y)\leq \mathbb E[ V_{s}(X_{s},Y_{s})|\mathcal F_t]+\mathbb E\left[\left.\int_t^{s}\{\tfrac{1}{2}\eta\xi_r^2+\xi_rY_r+\tfrac{1}{2}\lambda_rX_r^2\}\,dr\right|\mathcal F_t\right].
\end{equation}
Hence, by Lemma~\ref{lemma-vanishing} and again the dominated convergence theorem letting $s\rightarrow T$ yields,
\begin{equation} \label{suboptimal}
V_t(x,y)\leq \mathbb E\left[\left.\int_t^T\{\tfrac{1}{2}\eta\xi_r^2+\xi_rY_r+\tfrac{1}{2}\lambda_rX_r^2\}\,dr\right|\mathcal F_t\right].
\end{equation}
Finally note that since the feedback control $\xi^*$ attains the infimum in~\eqref{ito-kunita} it holds equality in \eqref{ito-kunita-in-expectation}--\eqref{suboptimal} if $\xi=\xi^*.$ 
\end{proof}

\section{Conclusion}

In this paper we analyzed a novel stochastic optimal control problem arising in models of optimal trade execution with instantaneous and persistent price impact and stochastic resilience. Assuming that the instantaneous impact factor is constant but allowing for stochastic resilience and market risk we characterized the value function in terms of the unique solution to a three-dimensional stochastic Riccati equation with singular terminal condition in the first component. Our existence of solutions results used an extension of the asymptotic expansion approach introduced in \cite{GraeweHorstSere13} to a multi-dimensional setting. Several open problems remain. First, we cannot guarantee non-negativity of the trading rate. Intuitively, price-triggered round trips should not be beneficial if $y=0$. Based on our analysis, they can not be ruled out, though. Second, the assumption that $\eta$ and $\gamma$ are constant was important to establish the a priori estimates. An extension to more general impact factors, especially a random impact factor~$\gamma$ is certainly desirable as suggested in \cite{FruthSchoenebornUrusov15}. Third, a numerical analysis of a deterministic benchmark model suggests that our model can be viewed as a approximation to a model with both absolutely continuous and singular controls if $\eta \to 0$. While a formal proof of this limit result in a general non-Markovian framework would certainly be desirable it is clearly beyond the scope of the present paper.

\appendix
\section{Appendix} \label{sec-appendix}

	A necessary and sufficient condition under which the comparison theorem holds for multi-dimensional BSDEs has been first given by Hu and Peng~\cite{HuPeng06}. The equivalent quasi-monotonicity condition (iv) below can be found in~\cite[Theorem~3.1]{Xu16}. The comparison results in \cite{HuPeng06, Xu16} are stated under an additional continuity condition on the drivers that is not satisfied in our model. However, the continuity condition is only needed to prove that if a comparison principle holds, then the system is necessarily quasi-monotone. Continuity is not needed for the converse implication. As such, their results are in fact applicable to our framework. Even though, for the reader's convenience we refer instead to a comparison result for multi-dimensional reflected BSDEs by Wu and Xiao~\cite{WuXiao10} that is formulated explicitly under the weaker regularity assumption~(i) given below.
	
\begin{proposition}[{\cite[Theorem~3.1]{WuXiao10}}] \label{prop-cp}
	Let $(Y^i,Z^i)\in L^2_{\mathcal F}(\Omega;C([0,T];\mathbb R^d))\times L^2_{\mathcal F}(0,T;\mathbb R^{d\times m})$, $i=1,2$, be solutions to the BSDEs
\[
-dY_t^i=f^i(t,Y_t^i,Z_t^i)\,dt-Z_t^i\,dW_t, \qquad 0\leq t\leq T,
\]
with the drivers $f^i:\Omega\times[0,T]\times\mathbb R^d\times\mathbb R^{d\times m}\rightarrow \mathbb R^d$, $i=1,2$, satisfying 
\begin{itemize}
	\item[(i)] $f^i(\cdot,y,z)\in L^2_{\mathcal F}(0,T;\mathbb R^d)$ for all $y\in\mathbb R$ and $z\in\mathbb R^{d\times m}$,
	\item[(ii)] there exits $L>0$ such that for all $y,y'\in\mathbb R^d$ and $z,z'\in\mathbb R^{d\times m}$,
	\[|f^i(t,y,z)-f^i(t,y',z')|\leq L(|y-y'|+|z-z'|),\qquad d\mathbb P\times dt\text{-a.e.,}
	\]
\end{itemize}
and suppose, in addition,
\begin{itemize}
	\item[(iii)] $Y_T^1\leq Y_T^2$,
	\item[(iv)] for every $k=1,\ldots,d$ it holds for all $y^1,y^2\in\mathbb R^d$ and $z^1,z^2\in\mathbb R^{d\times m}$ such that $y_k^1=y_k^2$, $z_k^1=z_k^2$, $y_l^1\leq y_l^2$, $l\neq k$:
\[
f_k^1(t,y^1,z^1)\leq f^2_k(t,y^2,z^2), \qquad \text{ $d\mathbb P\times dt$-a.e.}
\]
\end{itemize}
Then $Y^1_t\leq Y^2_t$,  $t\in[0,T]$.
\end{proposition}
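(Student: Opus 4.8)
The plan is to reduce \textbf{Proposition~\ref{prop-cp}} to a linear energy estimate for the difference process $\hat Y:=Y^1-Y^2$, $\hat Z:=Z^1-Z^2$, after first reading off from the quasi-monotonicity hypothesis~(iv) the three structural facts that make a componentwise comparison possible. Taking $f^1=f^2$, $y^1=y^2$ in~(iv) and letting the off-diagonal components of $z^1,z^2$ vary freely (together with the symmetry under $z^1\leftrightarrow z^2$) shows that each $f^i_k(t,y,z)$ depends on $z$ only through its $k$-th row $z_k$. Taking $f^1=f^2$, $z^1=z^2$ shows that $y\mapsto f^i_k(t,y,z)$ is nondecreasing in every off-diagonal coordinate $y_l$, $l\neq k$. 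Taking $y^1=y^2$, $z^1=z^2$ shows $f^1_k(t,y,z)\leq f^2_k(t,y,z)$ for all $(y,z)$.

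Next I would linearize. Write
\[
f^1(t,Y^1_t,Z^1_t)-f^2(t,Y^2_t,Z^2_t)=\varphi_t+\bigl[f^2(t,Y^1_t,Z^1_t)-f^2(t,Y^2_t,Z^2_t)\bigr],\qquad \varphi_t:=f^1(t,Y^1_t,Z^1_t)-f^2(t,Y^1_t,Z^1_t),
\]
and handle the bracket coordinatewise: changing the $y$-argument from $Y^1_t$ to $Y^2_t$ one coordinate at a time yields a progressively measurable matrix process $(a^{kl}_t)$ with $|a^{kl}_t|\leq L$ and $a^{kl}_t\geq 0$ for $l\neq k$ (the off-diagonal monotonicity above), while changing the $k$-th row of the $z$-argument one coordinate at a time yields coefficients $(\beta^{kj}_t)_{j\leq m}$ with $|\beta^{kj}_t|\leq L$; here it is essential that $f^2_k$ sees only the $k$-th row of $z$, so no cross-row terms enter. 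Measurability of these difference quotients is handled by the usual selection, setting them to $0$ where a denominator vanishes. Then $\hat Y$ solves, for $k=1,\dots,d$,
\[
-d\hat Y^k_t=\Bigl(\varphi^k_t+\sum_l a^{kl}_t\hat Y^l_t+\sum_j\beta^{kj}_t\hat Z^{kj}_t\Bigr)\,dt-\hat Z^k_t\,dW_t,\qquad \hat Y_T=Y^1_T-Y^2_T\leq 0,
\]
with $\varphi_t\leq 0$ componentwise by the pointwise domination and by~(iii).

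The estimate itself proceeds by applying It\^o's formula to $(\hat Y^k_t)^{+2}$ (legitimate since $x\mapsto(x^+)^2$ is $C^1$ with Lipschitz derivative, after the standard mollification), summing over $k$, and setting $\psi_t:=\sum_k(\hat Y^{k,+}_t)^2$. The term $2\hat Y^{k,+}_t\varphi^k_t$ is nonpositive; the diagonal term is bounded by $2L(\hat Y^{k,+}_t)^2$; for $l\neq k$ one uses $a^{kl}_t\geq 0$ together with $\hat Y^l_t\leq\hat Y^{l,+}_t$ to get $2\hat Y^{k,+}_ta^{kl}_t\hat Y^l_t\leq L\bigl((\hat Y^{k,+}_t)^2+(\hat Y^{l,+}_t)^2\bigr)$, which is exactly the point where quasi-monotonicity is invoked; and Young's inequality applied to $2\hat Y^{k,+}_t\beta^{kj}_t\hat Z^{kj}_t$, using $\{\hat Y^{k,+}_t>0\}=\{\hat Y^k_t>0\}$, produces a term $\leq\tfrac12\mathbf{1}_{\{\hat Y^k_t>0\}}|\hat Z^k_t|^2$ that is absorbed by the $-\mathbf{1}_{\{\hat Y^k_t>0\}}|\hat Z^k_t|^2\,dt$ coming from the quadratic variation. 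After a standard localization (stopping when $|\hat Y|$ reaches $n$, which is licit because $\hat Y\in L^2_{\mathcal F}(\Omega;C([0,T];\mathbb R^d))$ and $\hat Z\in L^2_{\mathcal F}(0,T;\mathbb R^{d\times m})$) and taking expectations, one arrives at $\mathbb E[\psi_t]\leq C\int_t^T\mathbb E[\psi_s]\,ds$ with $C=C(L,d,m)$. Backward Gronwall gives $\mathbb E[\psi_t]=0$ for every $t$, hence $\hat Y^k_t\leq 0$ a.s.\ for each $t$ and $k$, and path-continuity upgrades this to all $t\in[0,T]$ simultaneously, i.e.\ $Y^1_t\leq Y^2_t$.

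I expect the only genuine obstacle to be the linearization step and, within it, correctly extracting from the abstract condition~(iv) the two sign facts on which the whole argument rests: that each component's driver ignores the off-diagonal rows of $z$ (so that the energy estimate for $\hat Y^k$ only ever meets $|\hat Z^k|^2$ and can absorb it), and that the off-diagonal $y$-sensitivities are nonnegative (so that $\hat Y^l$ may be replaced by $\hat Y^{l,+}$). Once these are isolated, the remainder is the classical one-dimensional energy/Gronwall argument run in parallel over the $d$ coordinates.
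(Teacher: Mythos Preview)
The paper does not prove this proposition; it is quoted from \cite{WuXiao10} (see also \cite{HuPeng06}, \cite{Xu16}) and no argument is given in the appendix beyond the citation. So there is no ``paper's own proof'' to compare against. That said, your overall strategy---apply It\^o's formula to $\sum_k(\hat Y^{k,+}_t)^2$, control the drift, and close with backward Gronwall---is exactly the standard route and is what the cited references do.

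There is, however, a genuine gap in your linearization step. You write ``taking $f^1=f^2$ in~(iv)'' to extract that each $f^i_k$ depends on $z$ only through $z_k$ and is monotone in the off-diagonal $y$-coordinates. But $f^1$ and $f^2$ are fixed given drivers; condition~(iv) is a \emph{joint} hypothesis on the pair and does not, by itself, decompose into individual structural properties of $f^1$ or $f^2$. (That decomposition is the \emph{necessity} direction in Hu--Peng/Xu and requires an additional continuity assumption the paper explicitly avoids.) Consequently your claim that the linearized off-diagonal coefficients $a^{kl}_t$ of $f^2$ are nonnegative, and that $f^2_k$ ignores the off-diagonal rows of $z$, is not justified from~(iv) alone.

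The fix is to skip the linearization and use~(iv) directly inside the energy estimate. On $\{\hat Y^k_t>0\}$, introduce the intermediate point $\tilde Y$ with $\tilde Y_k=Y^2_{t,k}$, $\tilde Y_l=Y^1_{t,l}\wedge Y^2_{t,l}$ for $l\neq k$, and $\tilde Z$ with $\tilde Z_k=Z^2_{t,k}$, $\tilde Z_l=Z^1_{t,l}$ for $l\neq k$. Then $\tilde Y_k=Y^2_{t,k}$, $\tilde Z_k=Z^2_{t,k}$, $\tilde Y_l\leq Y^2_{t,l}$, so~(iv) gives $f^1_k(t,\tilde Y,\tilde Z)\leq f^2_k(t,Y^2_t,Z^2_t)$. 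Combining this with the Lipschitz bound~(ii) for $f^1$ yields
\[
(\hat Y^{k,+}_t)\bigl[f^1_k(t,Y^1_t,Z^1_t)-f^2_k(t,Y^2_t,Z^2_t)\bigr]\leq L\,(\hat Y^{k,+}_t)\Bigl(\sum_l(\hat Y^{l,+}_t)+|\hat Z^k_t|\Bigr),
\]
which is precisely the inequality your Gronwall step needs, and it involves only $|\hat Z^k_t|$ as required. With this modification the rest of your argument goes through verbatim.
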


	Below we state a local $L^\infty$-existence result for BSDEs with locally Lipschitz drivers not depending on $Z$. The result seems well well-known; we give it for completeness. Specifically, we consider the BSDE
\begin{equation} \label{L-infty-BSDE}
	Y_t= \zeta+\int_t^Tf(s,Y_s)\,ds-\int_t^TZ_s\,dW_s, \qquad 0\leq t\leq T,
\end{equation}
where we assume that the terminal value
\begin{itemize}
	\item $\zeta\in L^{\infty}_{\mathcal F_T}(\mathbb R^d)$ 
\end{itemize}
is essentially bounded and that the driver  $f:\Omega\times[0,T]\times\mathbb R^d\rightarrow \mathbb R^d$ satisfies
			\begin{itemize}
				\item $f(\cdot,0)\in L^{\infty}_{\mathcal F}(0,T;\mathbb R^d)$,
				\item for every $R>0$ there exists $L>0$ such that for all $|y|,|y'|\leq R$,
					\begin{equation} \label{locally-lip}
						|f(t,y)-f(t,y')|\leq L |y-y'|.
					\end{equation}
			\end{itemize}

\begin{lemma} \label{lemma-local-lip-l-infty-existence}
	Under the above assumptions there exits $\delta>0$ such that there exits on $[T-\delta,T]$ a short-time solution $(Y,Z)\in L_{\mathcal F}^\infty(\Omega;C([T-\delta,T];\mathbb R^d))\times L^2_{\mathcal F}(T-\delta,T;\mathbb R^{d\times m})$ to \eqref{L-infty-BSDE}.
\end{lemma}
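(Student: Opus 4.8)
The plan is to prove Lemma~\ref{lemma-local-lip-l-infty-existence} by a standard Banach fixed-point argument in the space $\mathcal{S} = L^\infty_{\mathcal F}(\Omega; C([T-\delta,T];\mathbb R^d))$ equipped with the supremum norm, with $\delta>0$ chosen small enough to obtain a contraction. First I would fix $R := 1 + \|\zeta\|_{L^\infty} + T\|f(\cdot,0)\|_{L^\infty}$ and let $L$ be the Lipschitz constant from \eqref{locally-lip} associated with this $R$. On the closed ball $\overline B_{\mathcal S}(R)$, define the map $\Phi$ sending $Y \in \overline B_{\mathcal S}(R)$ to the process
\[
	\Phi(Y)_t := \mathbb E\!\left[\left.\zeta + \int_t^T f(s,Y_s)\,ds \,\right|\mathcal F_t\right], \qquad T-\delta \le t \le T.
\]
Since $s\mapsto f(s,Y_s)$ is progressively measurable and essentially bounded (by $\|f(\cdot,0)\|_{L^\infty} + LR$ on the relevant range), the conditional expectation is well defined, and the martingale $\big(\mathbb E[\zeta + \int_{T-\delta}^T f(s,Y_s)\,ds \mid \mathcal F_t]\big)_t$ has a continuous modification by the martingale representation theorem, so $\Phi(Y)$ admits a representative in $\mathcal S$.

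Next I would check that $\Phi$ maps $\overline B_{\mathcal S}(R)$ into itself and is a contraction, both by elementary estimates. For self-mapping, $|\Phi(Y)_t| \le \|\zeta\|_{L^\infty} + \delta\big(\|f(\cdot,0)\|_{L^\infty} + LR\big)$ $d\mathbb P$-a.e., which is $\le R$ once $\delta$ is small enough that $\delta(\|f(\cdot,0)\|_{L^\infty} + LR) \le 1$. For the contraction property, for $Y, Y' \in \overline B_{\mathcal S}(R)$ the Lipschitz bound \eqref{locally-lip} gives
\[
	|\Phi(Y)_t - \Phi(Y')_t| \le \mathbb E\!\left[\left.\int_t^T |f(s,Y_s) - f(s,Y'_s)|\,ds\,\right|\mathcal F_t\right] \le L\delta\,\|Y - Y'\|_{\mathcal S},
\]
so choosing $\delta \le (2L)^{-1}$ makes $\Phi$ a $1/2$-contraction on the complete metric space $\overline B_{\mathcal S}(R)$ (completeness being inherited from $L^\infty_{\mathcal F}(\Omega;C([T-\delta,T];\mathbb R^d))$). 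By the Banach fixed point theorem there is a unique $Y \in \overline B_{\mathcal S}(R)$ with $Y = \Phi(Y)$.

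Finally I would recover the control process $Z$ and verify $(Y,Z)$ solves \eqref{L-infty-BSDE}. The fixed point satisfies $Y_t = \mathbb E[\zeta + \int_t^T f(s,Y_s)\,ds \mid \mathcal F_t]$, so the process $M_t := \mathbb E[\zeta + \int_{T-\delta}^T f(s,Y_s)\,ds \mid \mathcal F_t]$ is a square-integrable (indeed bounded) martingale on $[T-\delta,T]$, and by the martingale representation theorem there is $Z \in L^2_{\mathcal F}(T-\delta,T;\mathbb R^{d\times m})$ with $M_t = M_{T-\delta} + \int_{T-\delta}^t Z_s\,dW_s$. Writing $Y_t = M_t - \int_{T-\delta}^t f(s,Y_s)\,ds$ and using $Y_T = \zeta$, $M_T = M_{T-\delta} + \int_{T-\delta}^T Z_s\,dW_s$, one reads off exactly \eqref{L-infty-BSDE} on $[T-\delta,T]$. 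I do not expect a genuine obstacle here; the only point requiring a little care is ensuring the conditional-expectation process has a continuous version and lies in $L^\infty$ so that the fixed-point argument stays inside $\mathcal S$ rather than merely in $L^2_{\mathcal F}(\Omega;C)$ — this is handled by the boundedness of $\zeta$ and of $f$ along bounded processes together with the martingale representation theorem. The constant $L$ and hence $\delta$ depend only on $R$, which in turn depends only on $\|\zeta\|_{L^\infty}$, $T$, and $\|f(\cdot,0)\|_{L^\infty}$, a uniformity that is used in the iteration argument in the proof of Theorem~\ref{thm-existence}.
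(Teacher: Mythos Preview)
Your proposal is correct and follows essentially the same approach as the paper: a Banach fixed-point argument in $L^\infty_{\mathcal F}(\Omega;C([T-\delta,T];\mathbb R^d))$ for the operator $Y\mapsto \mathbb E[\zeta+\int_\cdot^T f(s,Y_s)\,ds\mid\mathcal F_\cdot]$, followed by the martingale representation theorem to recover $Z$. The only cosmetic difference is that the paper chooses $R=2(\|\zeta\|_{L^\infty}+T\|f(\cdot,0)\|_{L^\infty})$ and verifies the self-map property via $|\Gamma(Y)_t|\le|\Gamma(Y)_t-\Gamma(0)_t|+|\Gamma(0)_t|\le \tfrac12\|Y\|_{L^\infty}+\tfrac12 R\le R$, so that the single condition $\delta\le(2L)^{-1}$ suffices, whereas you impose a second smallness condition on $\delta$; both are fine.
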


\begin{proof}
	We will show that one may choose $\delta=1/(2L)$, where $L$ is the Lipschitz constant given in~\eqref{locally-lip} with respect to $R=2(\|\zeta\|_{L^\infty}+T \|f(\cdot,0)\|_{L^\infty})$.
	
	With $\mathcal H=L_{\mathcal F}^\infty(\Omega;C([0,T];\mathbb R^d))$ we define the operator $\Gamma:\mathcal H\rightarrow \mathcal H$ by
	\[
	\Gamma(Y)_t =\mathbb E\left[\left.\zeta+\int_t^Tf(s,Y_s)\,ds\right |\mathcal F_t\right].
	\]
Then $\Gamma$ is a contraction on $\overline B_{\mathcal H}(R)$: For all $Y,Y'\in\overline B_{\mathcal H}(R)$ it holds $d\mathbb P\times dt$-a.e.,
\begin{align*}
	|\Gamma(Y)_t-\Gamma(Y'_t)| &\leq \mathbb E\left[\left.\int_t^T|f(s,Y_s)-f(s,Y_s')| \,ds\right|\mathcal F_t\right]\\
	&\leq L \mathbb E\left[\left.\int_t^T|Y_s-Y_s'|\,ds\right|\mathcal F_t\right]\\
	&\leq L(T-t)\|Y-Y'\|_{L^\infty}\\
	&\leq L\delta \|Y-Y'\|_{L^\infty}=\frac{1}{2} \|Y-Y'\|_{L^\infty}.
\end{align*}
Furthermore, $\Gamma$ maps $\overline B_{\mathcal H}(R)$ into itself: For all $Y\in\overline B_{\mathcal H}(R)$ it holds $d\mathbb P\times dt$-a.e.,
\begin{align*}
	|\Gamma(Y)_t|&\leq |\Gamma(Y)_t-\Gamma(0)_t|+|\Gamma(0)_t|\\
		&\leq \|\Gamma(Y)-\Gamma(0)\|_{L^\infty}+\|\zeta\|_{L^\infty}+(T-t)\|f(\cdot,0)\|_{L^\infty}\\
		&\leq \frac{1}{2}\|Y\|_{L^\infty}+\|\zeta\|_{L^\infty}+T \|f(\cdot,0)\|_{L^\infty}\leq R.
\end{align*}
Hence, $\Gamma$ has a unique fixed point in $\overline B_{\mathcal H}(R)$. By the martingale representation theorem, this fixed point gives the desired solution.
\end{proof}

\bibliographystyle{siam}
\bibliography{gh}

\end{document}